\newcommand{\sym}{\mathrm{Sym}}
\newcommand{\id}{\ensuremath{\text{\rm id}}}
\newcommand{\D}{\mathrm{D}}
\definecolor{cupgreen}{rgb}{0,0.498,0.208}
\definecolor{cupblue}{rgb}{0,0,.5}
\definecolor{cupred}{rgb}{1,0.04,0}
\definecolor{cuppink}{rgb}{0.925,0,0.545}
\definecolor{cupmagenta}{rgb}{0.624,0.161,0.424}
\definecolor{cupbrown}{rgb}{0.71,0.212,0.133}
\definecolor{cupgreen}{rgb}{0,0,0}
\definecolor{cupblue}{rgb}{0,0,0}
\definecolor{cupred}{rgb}{0,0,0}
\definecolor{cuppink}{rgb}{0,0,0}
\definecolor{cupmagenta}{rgb}{0,0,0}
\definecolor{cupbrown}{rgb}{0,0,0}
\definecolor{TITLE}{rgb}{0,0,0}
\definecolor{midblue}{rgb}{0.00,0.0,0.80}
\definecolor{darkblue}{rgb}{0.00,0.00,0.45}
\definecolor{SECTION}{rgb}{0.50,0.00,1.00}
\definecolor{THM}{rgb}{0.8,0,0.1}
\definecolor{SEC}{rgb}{0,0,1}
\newcommand{\A}{\mathcal A}
\newcommand{\B}{\mathcal B}
\newcommand{\aut}{\mathrm{Aut}}
\newtheorem{theorem}{{\color{THM} Theorem}}[section]
\DeclareRobustCommand{\stirling}{\genfrac\{\}{0pt}{}}
\newtheorem{lemma}[theorem]{{\color{THM}Lemma}}
\newtheorem{observation}[theorem]{{\color{THM}Observation}}
\theoremstyle{definition}
\newtheorem{altdefinition}[theorem]{{\color{THM}Alternate Definition\ }}
\newtheorem{example}[theorem]{{\color{THM}Example}}
\newtheorem{remark}[theorem]{{\color{THM}Remark}}
\numberwithin{equation}{section}
\newtheorem*{definition*}{\color{THM}Definition}
\date{}
\title{Distinguishing threshold of graphs}
\author[1]{Mohammad H. Shekarriz\thanks{m.shekarriz@deakin.edu.au}}
\affil[1]{School of Information Technology, Deakin University, Burwood, Australia}	
\author[2]{Bahman Ahmadi\thanks{bahman.ahmadi@shirazu.ac.ir}}
\author[2]{S. A. Talebpour\thanks{seyed.alireza.talebpour@gmail.com}}
\author[2]{M. H. Shirdareh Haghighi\thanks{shirdareh@shirazu.ac.ir}}
\affil[2]{Department of Mathematics, Shiraz University, Shiraz, Iran}
\begin{document}

	\maketitle

	\begin{abstract}
		A vertex coloring of a graph $G$ is called distinguishing if no non-identity automorphisms of $G$ can preserve it. The distinguishing number of $G$, denoted by $D(G)$, is the minimum number of colors required for such a coloring, and the distinguishing threshold of $G$, denoted by $\theta(G)$, is the minimum number $k$ such that every $k$-coloring of $G$ is distinguishing. As an alternative  definition, $\theta(G)$ is one more than the maximum number of cycles in the cycle decomposition of automorphisms of $G$. In this paper, we characterize $\theta (G)$ when $G$ is disconnected. Afterwards, we prove that, although for every positive integer $k\neq 2$ there are infinitely many graphs whose distinguishing thresholds are equal to $k$, we have $\theta(G)=2$ if and only if $\vert V(G)\vert =2$. Moreover, we show that if $\theta(G)=3$, then either $G$ is  isomorphic to one of the four graphs on~3 vertices or it is of order $2p$, where $p\neq 3,5$ is a prime number. Furthermore, we prove that $\theta(G)=D(G)$ if and only if $G$ is asymmetric, $K_n$ or $\overline{K_n}$. Finally, we consider all generalized Johnson graphs, $J(n,k,i)$, which are the graphs on all $k$-subsets of $\{1,\ldots , n\}$ where two vertices $A$ and $B$ are adjacent if  $|A\cap B|=k-i$. After studying their automorphism groups and distinguishing numbers, we calculate their distinguishing thresholds as $\theta(J(n,k,i))={n\choose k} - {n-2\choose k-1}+1$, unless $ k=\frac{n}{2}$ and $i\in\{ \frac{k}{2} , k\}$ in which case we have $\theta(J(n,k,i))={n\choose k}$.

		\medskip
		
		\noindent\textbf{Keywords:} {symmetry breaking, distinguishing coloring, distinguishing threshold, generalized Johnson graphs}
		
		\noindent\textbf{Mathematics Subject Classification}: 05C09, 05C15, 05C25, 05C30
	\end{abstract}

	\section{Introduction}\label{intro}
	
	An \emph{automorphism} of a graph is a symmetry of the graph, and is said to be \emph{broken} by a vertex coloring if it maps some vertex to a vertex with different color. A vertex coloring of a graph $G$ is called \emph{distinguishing} (or symmetry breaking) if it breaks all non-trivial automorphisms of $G$. The \emph{distinguishing number} of $G$, denoted by $D(G)$, is the smallest number of colors required for such a coloring. For a positive integer $d$, if there is a distinguishing coloring of $G$ with $d$ colors, we say that $G$ is \emph{$d$-distinguishable}. One can easily verify that $D(K_n ) = n$, $D(K_{n,n} ) = n + 1$, $D(P_n ) = 2$ for $n \geq 2$, $D(C_3 ) = D(C_4) = D(C_5 ) = 3$, while $D(C_n ) = 2$, for $n \geq 6$~\cite{albertson1996symmetry}.
	
	The concept has its roots back in 1970s, when Babai defined \emph{asymmetric coloring} in~\cite{Babai1977}, but it was only after publication of~\cite{albertson1996symmetry} by Albertson and Collins in 1996, that the present terminology came to be widely used. Afterwards, the subject spawned a wealth of results and newly defined graph theoretical indices. For many classes of graphs, methods to efficiently break their symmetries were already devised and, where it was feasible, general bounds on their distinguishing number were also derived.
	
	For a connected finite graph $G$, it was shown by Collins and Trenk \cite{Collins2006}, and independently by Klav\v{z}ar, Wong and Zhu \cite{klavzar2006}, that $D(G) \leq \Delta + 1$, where $\Delta$ is the largest vertex degree of~$G$. Furthermore, $D(G)=\Delta+1$ if and only if $G$ is isomorphic to $K_{\Delta+1}$, $K_{\Delta,\Delta}$, or $C_5$. 
	
	Although there are many results about finite graphs, the literature has also been enriched with numerous results on infinite graphs, see e.g.~\cite{Imrich2017Bounds,Imrich2007Infinite,Tucker2011}. The concept of distinguishing can be generalized to some other discrete structures and/or using other means of symmetry breaking. For instance, we can mention Imrich et al.~\cite{Imrich2014Endo} who considered breaking graphs' endomorphisms by coloring, Ellingham and Schroeder~\cite{Ellingham2011}  who considered symmetry breaking via partitioning and Laflamme, Nguyen Van Th\'{e} and Sauer~\cite{Laflamme2010} who considered the distinguishing number of some homogeneous structures such as directed graphs and posets.
	
	There are also several generalizations of distinguishing coloring. Collins and Trenk~\cite{Collins2006} mixed the concept with proper coloring and introduced the \emph{distinguishing chromatic number $\chi_{D}(G)$} of a graph $G$. Moreover, Kalinowski and Pil\'{s}niak~\cite{Kalinowski2015Edge} introduced the \emph{distinguishing index $D'(G)$} and the \emph{distinguishing chromatic index $\chi'_D (G)$}, while in~\cite{Kalinowski2016Total} they, along with Wo\'{z}niak,  defined and studied the analogous notions $D''(G)$ and $\chi''_D (G)$ for total coloring.
	
	The literature is also rich in results for product graphs. For example, Bogstad and Cowen~\cite{Bogstad2004} showed that for $k \geq 4$, every hypercube $Q_k$ of dimension $k$, which is the Cartesian product of $k$ copies of $K_2$, is $2$-distinguishable, while Imrich and Klav{\v{z}}ar in \cite{Imrich2006CartPower} showed that the distinguishing number of Cartesian powers of a connected graph $G$ is equal to $2$ except for $K_2^2, K_3^2, K_2^3$. Furthermore, Imrich, Jerebic and Klav\v{z}ar \cite{Imrich2008CartComp} showed that Cartesian products of relatively prime graphs whose sizes are close to each other can be distinguished with a small number of colors.
	
	The lexicographic product of two graphs $G$ and $H$, which is shown here by $G[H]$, was also a subject of symmetry breaking via vertex and edge coloring. Alikhani and Soltani in~\cite{Alikhani2018lexico} showed that, under some conditions on the automorphism group of a graph $G$, we have $D(G)\leq D(G^k)\leq D(G)+k-1$, where $G^k$ is the $k$th lexicographic power of $G$. Meanwhile, they also showed that if $G$ and $H$ are connected graphs, then  $D(H)\leq D(G[H])\leq |V(G)|\cdot D(H)$. Ahmadi, Alinaghipour and Shekarriz~\cite{ahmadi2020number} defined some indices such as $\varphi_k(G)$ (respectively, $\Phi_k(G)$), which stands for the number of non-equivalent distinguishing colorings of the graph $G$ with exactly $k$  (respectively, at most $k$) colors, and used them to refine findings on distinguishing lexicographic product. They proved that $D(X[Y])=k$ where $k$ is the least integer that $\Phi_k (Y)\geq D(X)$, when  the automorphisms of $X[Y]$ are all natural, i.~e.,  map copies of $Y$ to copies of $Y$~\cite{heminger1968}.
	
	It seems rather  easy to calculate $\Phi_k (G)$ and $\varphi_k (G)$ when $G$ is a path or a cycle. However, the calculations are not easy in the general case, and known algorithms have exponential running times. Consequently, it might take a very long time for a computer algebra system to count the number of non-equivalent distinguishing colorings of a graph $G$ with non-trivial  symmetries on $n$ vertices, even when $n$ is  as small as $10$. However, when $k$ is large enough, even for the case of  large graphs,  the calculations are much easier. This motivated Ahmadi, Alinaghipour and Shekarriz~\cite{ahmadi2020number} to define the  following.
	
	\begin{definition*}\label{Def:theta} \textnormal{\cite{ahmadi2020number}}
		For a graph $G$, the distinguishing threshold $\theta(G)$ is the minimum number $k$ of colors such that any coloring of the graph $G$ with $k$ colors is distinguishing.
	\end{definition*}
	
	Clearly we have $D(G)\leq \theta(G)\leq |V(G)|$. It is not also hard to see that $\theta(K_n)=\theta(\overline{K_n})=n$, $\theta(K_{m,n})=m+n$, $\theta(P_n)=\lceil\frac{n}{2}\rceil+1$, for $n\geq 2$, and $\theta(C_n)=\lfloor\frac{n}{2}\rfloor+2$, for $n\geq 3$~\cite{ahmadi2020number}. The main objective of this paper is to continue the study of this parameter. 
	
	An alternative definition can be given in terms of $\aut(G)$. Let $\alpha$ be a non-identity automorphism of $G$, and let $c(\alpha)$ be the number of cycles of $\alpha$ as a permutation (fixed points count as cycles) and set $c(\id)=0$. To distinguish $\alpha$, at least one non-trivial cycle must be assigned two different colors. By the pigeonhole principle we have the following alternative definition.
	
	\begin{altdefinition}\label{max-lem}
		Given a graph $G$, its distinguishing threshold is 
		$$\theta(G) = 1 + \max\left\{c(\alpha) \; :\; \alpha\in\aut(G)\right\}.$$
	\end{altdefinition}
	In particular, we can talk about the threshold of any permutation group. As
	far as we know, this seems to be a new invariant for permutation groups.
	
	Suppose that $G$ has some non-trivial symmetries and assume that $\alpha$ is a non-identity automorphism of $G$ for which $c(\alpha)$ is maximum. Then $\alpha$ has prime order since otherwise $\alpha^m$ has more cycles than $\alpha$ for any $m > 1$ properly dividing the order of $\alpha$. Consequently, we have the following observation.
	
	\begin{observation}\label{prime-alpha}
		Let $G$ be a graph with $\aut(G)\neq \{\id\}$ and let $\alpha\in \aut(G)$ be such that $\theta(G)=c(\alpha)+1$. Then there is a prime number $p$ such that $o(\alpha)=p$. In particular, the length of every cycle of $\alpha$ is either $p$ or $1$.
	\end{observation}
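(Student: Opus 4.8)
The plan is to argue by contradiction: assuming $o(\alpha)$ is not prime, I would produce an automorphism of $G$ with strictly more cycles than $\alpha$, contradicting the fact that $\alpha$ maximizes $c$ over $\aut(G)$. First I would record the elementary fact about powers of permutations: if $\sigma$ has a cycle of length $\ell$ on some orbit, then $\sigma^m$ restricted to that orbit splits into exactly $\gcd(\ell,m)$ cycles, each of length $\ell/\gcd(\ell,m)$. Hence, for any $m$ with $\alpha^m\neq\id$, writing $C$ for the cycles of $\alpha$ and $\ell_C$ for their lengths, we get $c(\alpha^m)=\sum_C\gcd(\ell_C,m)\geq c(\alpha)$, with equality precisely when $\gcd(\ell_C,m)=1$ for every $C$.

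Next I would note that $\alpha\neq\id$: since $\aut(G)\neq\{\id\}$ there is a non-identity automorphism, and any non-identity permutation has at least one cycle, so its $c$-value is at least $1>0=c(\id)$; thus the maximizer $\alpha$ is non-trivial and $n:=o(\alpha)\geq 2$. Suppose $n$ were composite and pick a prime $q$ dividing $n$; then $q<n$ (as $n$ is composite), so $\alpha^q\neq\id$ while $\alpha^q\in\aut(G)$. Since $n=o(\alpha)$ is the least common multiple of the cycle lengths of $\alpha$ and $q$ is a prime dividing $n$, there is a cycle $C_0$ of $\alpha$ with $q\mid\ell_{C_0}$, so $\gcd(\ell_{C_0},q)=q\geq 2$. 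Applying the identity above with $m=q$ gives $c(\alpha^q)\geq\bigl(c(\alpha)-1\bigr)+q=c(\alpha)+(q-1)>c(\alpha)$, contradicting the maximality of $c(\alpha)$. Therefore $n=p$ is prime, and then every cycle length of $\alpha$ divides $p$, i.e.\ is $1$ or $p$, which is the ``in particular'' clause.

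I do not expect a real obstacle here: the two ingredients --- the cycle-splitting behaviour of $\sigma\mapsto\sigma^m$, and the fact that a prime dividing a least common multiple must divide one of the terms --- are both routine, and the rest is bookkeeping. If one prefers, the argument can be packaged without coordinates as the statement that no proper non-trivial power of $\alpha$ may have more cycles than $\alpha$, which immediately rules out composite order; but the explicit estimate on $c(\alpha^q)$ above seems the most transparent way to present it.
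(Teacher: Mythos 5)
Your proof is correct and follows essentially the same route as the paper, which disposes of the statement in one line by noting that if $o(\alpha)$ were composite then $\alpha^m$ would have more cycles than $\alpha$ for a proper divisor $m>1$ of the order. You have simply made that argument explicit (cycle-splitting under powers, a prime divisor of the lcm of cycle lengths dividing some cycle length, and the resulting strict inequality $c(\alpha^q)\geq c(\alpha)+q-1$), and all the details check out.
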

	
	We start our study by presenting some preliminaries in Section~\ref{sec:pre}. Then we thoroughly look at the distinguishing threshold in Section~\ref{threshold}, where we first consider $\theta(G)$ when $G$ is a disconnected graph. Then we study graphs with small thresholds by showing that, while all asymmetric graphs have distinguishing threshold equal to~1, the only graphs with $\theta(G) = 2$ are $K_2$ and $\overline{K_2}$. Moreover, a graph $G$ with $\theta(G)=3$ is either one of the four graphs on~3 vertices or a bi-regular graph on~$2p$ vertices where $p\neq 3,5$ is a prime number. In addition, we prove that $\theta(G)=D(G)$ if and only if $G$ is asymmetric, $K_n$ or $\overline{K_n}$. Furthermore, it is shown that the distinguishing threshold of an infinite graph is either~1 or infinity. 
	
	The relation between the distinguishing threshold and the cycle structure of the automorphism group motivates us to consider the graphs whose automorphism groups have been studied thoroughly. A rich family of such examples are the graphs that come from \emph{association schemes}, particularly, the \emph{Johnson scheme}. For detailed studies on this subject, we refer the reader  to the texts such as~\cite{eiichi1984algebraic} by Eiichi and Tatsuro. The interesting fact about the graphs in the Johnson scheme, i.e. the \emph{generalized Johnson graphs}, is that any permutation on $\{1,\ldots, n\}$ induces an automorphism of all the generalized Johnson graphs on the same set of vertices; that is, $\sym(n)$ is isomorphic to a subgroup of their automorphism groups. Jones~\cite{jones2005automorphisms} obtained the automorphism group of any \emph{merged Johnson graph} from which the automorphism group of any generalized Johnson graph can be obtained. Furthermore, Jones' result enabled Kim, Kwon and Lee~\cite{kim2011distinguishing} to obtain the distinguishing number of any merged Johnson graph which, in turn, leads to the distinguishing number of any generalized Johnson graph.  
	
	An important sub-family of generalized Johnson graphs, is the family of \textit{Kneser graphs} $K(n,k)$, to which the Petersen graph belongs. In~\cite{ahmadi2020number}, the authors have also addressed the problem of determining the distinguishing threshold of the Kneser graphs, in a special case.  More specifically,  for $n\geq 5$, they have proved that $\theta(K(n,2))=\frac{1}{2}(n^2-3n+6)$. In Section~\ref{johnson}, we continue this study and, along with listing the distinguishing number of all generalized Johnson graphs, we compute their distinguishing threshold.

	All graphs in this paper are assumed to be simple (undirected and loopless) and finite, unless otherwise stated. We use standard notation in graph theory which can be found in~\cite{diestel2017} by Diestel.

	\section{Preliminaries}\label{sec:pre}

	Two colorings $c_1$ and $c_2$ of a graph $G$ are called \textit{equivalent} if there is an automorphism $\alpha$ of $G$ such that $c_1(v) = c_2(\alpha(v))$, for all $v \in V(G)$. The number of non-equivalent distinguishing colorings of a graph $G$ with $\{1, \cdots, k\}$ as the set of admissible colors is shown by $\Phi_k(G)$, while the number of non-equivalent $k$-distinguishing colorings of a graph $G$ with $\{1, \cdots, k\}$ as the set of colors is shown by $\varphi_k(G)$~\cite{ahmadi2020number}. These two indices are related as 
	
	\[
	\Phi_k(G) = \sum_{i=D(G)}^k {k\choose i} \varphi_i(G).
	\]
	
	\noindent By simple counting arguments, one observes that $\Phi_k(P_n) = \frac{1}{2}(k^n - k^{\lceil \frac{n}{2} \rceil})$ when $n,k \geq 1$, and $\Phi_k(K_n) = {k \choose n}$ when $n\geq 2$ and $k \geq n$~\cite{ahmadi2020number}. 
	
	Calculating $\varphi_k(G)$ in general cases, requires counting the number of $k$-distinguishing colorings of $G$, while  when $k$ is large enough so that every $k$-coloring of $G$ is distinguishing, i.~e. when $k\geq\theta(G)$,  we have 
	
	\[
	\varphi_k (G)=\frac{k! \stirling{n}{k}}{|\aut(G)|},
	\] 
	
	\noindent where $\stirling{n}{k}$ stands for the Stirling number of the second kind~\cite{ahmadi2020number}.
	
	For an automorphism $\alpha$ and a vertex $v\in V(G)$, the ordered tuple  $$\sigma=(v, \alpha(v), \alpha^2 (v),\ldots,\alpha^{r-1} (v))$$ forms a \emph{cycle} of length $r$ provided that $r$ is the least integer such that $\alpha^{r}(v)=v$. We define the \emph{base} of the cycle $\sigma$ to be $\B(\sigma)=\{v, \alpha(v), \alpha^2 (v),\ldots,\alpha^{r-1} (v)\}$. The number of cycles of an automorphism $\alpha$, as also noted in the introduction, is denoted here by $c(\alpha)$.
	
	Using the cycle structures  of the elements of the automorphism group of a graph, one can obtain the exact value of the distinguishing threshold, as we saw in Definition~\ref{max-lem}.
	
	In the next section, we use the notion of circulant graphs, which are briefly discussed here. For any group $(\Gamma, \cdot)$ and any non-empty subset $S\subseteq \Gamma$ which is closed under inversion and $1_\Gamma\notin S$, the \emph{Cayley graph} $\mathrm{Cay}(\Gamma,S)$ on $\Gamma$ with the \emph{connection set} $S$ is defined to be the graph whose vertices are the elements of $\Gamma$, and in which two vertices $g,h$ are adjacent if $g\cdot h^{-1}\in S$. Cayley graphs are clearly  vertex-transitive. In the special case where $\Gamma=(\mathbb{Z}_n, +)$ and $S$ is a symmetric subset of $\mathbb{Z}_n^* = \mathbb{Z}_n\setminus\{0\}$,  $G=\mathrm{Cay}(\mathbb{Z}_n , S)$ is called a \emph{circulant graph}, see~\cite{morris2006} by Morris or~\cite{godsil-royle} by Godsil and Royle. 
	An alternative  way of describing a circulant graph $G$ is that there is an automorphism $\alpha \in \aut(G)$ such that $c(\alpha)=1$; that is, $G$ is a circulant graph if and only if it has an automorphism which contains all the vertices of $G$ in one cycle.
	
	Suppose that $G=\mathrm{Cay}(A,S)$ for an abelian group $(A,+)$. Since $S$ is closed under inversion, the inversion map $inv(a) = -a$ is obviously an automorphism of $G$. If $A$ is cyclic (that is, $G$ is circulant) of order $m>2$, then $\langle A, inv \rangle = A \rtimes C_2$ is the dihedral group $D_{2m}$, so $D_{2m}$ is a subgroup of $\aut(G)$, see~\cite[Proof of Corollary 1]{Kagno1946} by Kagno. Therefore, we have the following lemma.
	
	\begin{lemma}\label{dihedral-graph}
		Let $G$ be a circulant graph of odd order $m>2$. Then $D_{2m}\leq \aut(G)$.
	\end{lemma}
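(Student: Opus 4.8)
The plan is to build a copy of $D_{2m}$ by hand inside $\aut(G)$, using the regular representation of $\mathbb{Z}_m$ together with the inversion automorphism, exactly along the lines sketched in the paragraph preceding the lemma. Write $G=\mathrm{Cay}(\mathbb{Z}_m,S)$ with $S=-S\subseteq \mathbb{Z}_m\setminus\{0\}$ (using that a circulant graph of order $m$ is a Cayley graph on the cyclic group $\mathbb{Z}_m$), and produce the dihedral subgroup as $\langle \tau_1,\iota\rangle$, where $\tau_1$ is the unit translation and $\iota$ is inversion.

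First I would record the ``rotation'' part: for each $a\in\mathbb{Z}_m$ the map $\tau_a\colon x\mapsto x+a$ is an automorphism of $G$, since adjacency in $G$ depends only on the difference $x-y$; moreover $a\mapsto\tau_a$ is an injective homomorphism $\mathbb{Z}_m\to\aut(G)$, so $R:=\langle\tau_1\rangle=\{\tau_a:a\in\mathbb{Z}_m\}$ is a cyclic subgroup of $\aut(G)$ of order $m$. Next the ``reflection'' part: because $S$ is closed under inversion, $x-y\in S\iff -(x-y)\in S$, so $\iota\colon x\mapsto -x$ lies in $\aut(G)$; one has $\iota^2=\id$, and $\iota\neq\id$ since $m>2$ forces $\iota(1)=-1\neq 1$. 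A one-line computation gives $\iota\tau_a\iota = \tau_{-a}$ for every $a$, i.e. $\iota\tau_1\iota^{-1}=\tau_1^{-1}$.

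Then I would assemble these into $H:=\langle\tau_1,\iota\rangle\leq\aut(G)$. By the previous step, $\tau_1$ has order $m$, $\iota$ has order $2$, and $\iota\tau_1\iota^{-1}=\tau_1^{-1}$, which are precisely the defining relations of $D_{2m}$; hence $H$ is a homomorphic image of $D_{2m}$, and it suffices to check $|H|=2m$. For this I would observe that every non-identity element of $R$ is a fixed-point-free permutation of $\mathbb{Z}_m$, whereas $\iota$ fixes $0$ and $\iota\neq\id$; therefore $\iota\notin R$, so $H=R\cup\iota R$ has exactly $2m$ elements. Consequently $H\cong D_{2m}$, and $D_{2m}\leq\aut(G)$, as claimed.

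I do not expect a genuine obstacle here: the statement is elementary and the argument is already outlined in the text. The only point that needs a moment's care is the verification $|H|=2m$ (equivalently $\iota\notin R$), and this is exactly where the hypothesis $m>2$ is used — note in passing that the same argument in fact yields $D_{2m}\leq\aut(G)$ for every $m>2$, not only for odd $m$.
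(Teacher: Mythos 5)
Your proof is correct and follows essentially the same route as the paper, which obtains the lemma from the preceding paragraph by combining the translation subgroup of the Cayley graph $\mathrm{Cay}(\mathbb{Z}_m,S)$ with the inversion map $x\mapsto -x$ to generate $D_{2m}$ inside $\aut(G)$. Your added verification that $\iota\notin\langle\tau_1\rangle$ (and the observation that oddness of $m$ is not actually needed) is a correct and welcome elaboration of the same argument.
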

	
	A graph $G$ is called \emph{bi-regular} if there are two (not necessarily distinct) positive integers $d_1$ and $d_2$ such that the  degree of any vertex $v\in G$ is either $d_1$ or $d_2$. We denote the set of all vertices with vertex degrees $d_1$ and $d_2$ by $\mathcal{A}_1$ and $\mathcal{A}_2$, respectively. We use this notation in Theorem~\ref{theta3}.
	
	\section{Distinguishing threshold}\label{threshold}

	In what follows, we consider the distinguishing threshold for general graphs and prove some results which can provide useful machinery for this paper as well as future studies. Our default assumption is that all graphs here are connected; one exception is the following theorem in which we consider the distinguishing threshold for disconnected graphs.
	
	In order to state the theorem, we will make use of the following notation. Suppose that $G$ is a graph with connected components $G_1, \ldots, G_k$, where all the $G_i$ are asymmetric. Then we consider the isomorphism congruence classes $\mathcal{C}_1,\ldots, \mathcal{C}_m$ of the graphs $G_1,\ldots, G_k$, where we assume that the $\mathcal{C}_j$'s are increasingly ordered in the sense that if $j<\ell$, and $G_{i_j}\in \mathcal{C}_j$ and $G_{i_\ell}\in \mathcal{C}_\ell$, then $|V(G_{i_j})|\leq |V(G_{i_\ell})|$. We define $\nu(G)$ to be $|V(G_{i_s})|$, where $G_{i_s}\in \mathcal{C}_s$ and~$s$ is the smallest integer with the property that $|\mathcal{C}_s|>1$; and if there is no such $s$, then we define $\nu(G)$ to be $|V(G)|$. Note that, for example, if $k=1$ (i.e. if $G$ is a connected asymmetric graph), then $\nu(G)=|V(G)|$.
	
	\begin{theorem}\label{union}
		Let $G_1, G_2, \ldots, G_k$ be arbitrary connected   graphs and let $G = \cup_{i=1}^{k} G_i$.  
		\begin{enumerate}[(a)]
			\item If for every $1 \leq i \leq k$, $\aut(G_i) \ne \{\mathrm{id}\}$,  then 
			\[\theta(G) = \max_{1\leq i\leq k} \; \left\{\theta(G_i) + \sum_{j \neq i}|V(G_j)| \right\}.\]
			
			\item  If $\aut(G_i) = \{\mathrm{id}\}$, for all $1 \leq i \leq k$, then $\theta(G)=|V(G)|-\nu(G)+1$.
			
			\item If $\{A,B\}$ is a non-trivial partition of $\{1,\ldots,k\}$, $\aut(G_i) \ne  \{\mathrm{id}\}$, for $i\in A$, and $\aut(G_i) = \{\mathrm{id}\}$, for  $i \in B$, then set  $G_A = \bigcup_{i \in A} G_i$ and $G_B = \bigcup_{i \in B} G_i$. In this case, we have
			\[\theta(G) = \max \{ \theta(G_A) + |V(G_B)|,\; \theta(G_B) + |V(G_A)| \},\]
			unless   $G_B$ is asymmetric and $\theta(G_A) + |V(G_B)|\leq  \theta(G_B) + |V(G_A)|$, in which case  we have $\theta(G) = \theta(G_A) + |V(G_B)|$.
		\end{enumerate}
	\end{theorem}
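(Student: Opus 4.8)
The plan is to work throughout with Alternate Definition~\ref{max-lem}, so that it suffices to compute $c^{*}(H):=\max\{c(\alpha):\alpha\in\aut(H)\}$ for $H=G$ and to express it through the $c^{*}(G_i)$; recall $\theta(H)=c^{*}(H)+1$, that $0\le c^{*}(H)\le |V(H)|-1$, and that whenever $\aut(H)\ne\{\id\}$ the value $c^{*}(H)$ is attained by a \emph{non-identity} automorphism (since $c(\id)=0$) and is at least~$1$. The combinatorial engine is a description of how an $\alpha\in\aut(G)$ splits: it permutes the components $G_1,\dots,G_k$ within their isomorphism classes, so $V(G)$ decomposes into $\alpha$-invariant \emph{blocks}, one per orbit of components. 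If a block is an orbit of $\ell$ mutually isomorphic components, each on $m$ vertices, pick a base component $H$ in it and form the return map $\beta:=\alpha^{\ell}|_{H}\in\aut(H)$; then each cycle of $\beta$ of length $s$ produces exactly one cycle of $\alpha$ (of length $\ell s$) inside the block, so $\alpha$ has precisely $c(\beta)$ cycles on that block. Consequently a block contributes at most $m$ cycles, attaining $m$ only if $\ell=1$ and $\beta=\id$; if $\ell=1$ and $\beta\ne\id$ it contributes at most $c^{*}(H)$; and if $\ell\ge 2$ it contributes at most $m$ while spanning $\ell m\ge 2m$ vertices.

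For part~(a) I would write $c(\alpha)=|V(G)|-\sum\delta$, where the sum is over blocks and the deficiency $\delta$ of a block equals its number of vertices minus the number of $\alpha$-cycles it contains, so $\delta\ge 0$. Since $\alpha\ne\id$, at least one block is active: it either has $\ell\ge 2$, whence $\delta\ge(\ell-1)m\ge m$, or it is a fixed component $G_j$ on which $\alpha$ acts non-trivially, whence $\delta\ge |V(G_j)|-c^{*}(G_j)$. In both cases $\delta\ge\min_i\{|V(G_i)|-c^{*}(G_i)\}$ (for the first use $m=|V(G_j)|>|V(G_j)|-c^{*}(G_j)$, valid because $c^{*}(G_j)\ge 1$ as $\aut(G_j)\ne\{\id\}$). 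Hence $c(\alpha)\le |V(G)|-\min_i\{|V(G_i)|-c^{*}(G_i)\}=\max_i\{c^{*}(G_i)+\sum_{j\ne i}|V(G_j)|\}$, and equality is realised by the automorphism acting optimally on a best component $G_{i^{*}}$ and fixing everything else pointwise; passing to $\theta=c^{*}+1$ gives~(a). Part~(b) is the same count with the simplification that all return maps are forced to be $\id$, so an active block must have $\ell\ge 2$ and $\delta=(\ell-1)m\ge m$, minimised (by a transposition of two isomorphic copies) at $\nu(G)$; and if no isomorphism class has multiplicity $>1$ the graph is asymmetric, $c^{*}(G)=0$, and both sides of the claimed identity equal~$1$.

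For part~(c) the structural point is that no component of $G_A$ is isomorphic to a component of $G_B$ (one admits a non-identity automorphism, the other does not), so $\aut(G)=\aut(G_A)\times\aut(G_B)$ and $c^{*}(G)=\max\{c(\alpha_A)+c(\alpha_B)\}$ over non-identity pairs. Splitting on which factor moves, and using $c^{*}(G_B)<|V(G_B)|$ to discard the case where both factors are non-trivial, one gets $c^{*}(G)=\max\{c^{*}(G_A)+|V(G_B)|,\ |V(G_A)|+c^{*}(G_B)\}$ as long as $G_B$ is \emph{not} asymmetric, which rewrites as the displayed formula; but if $G_B$ is asymmetric then $\aut(G_B)=\{\id\}$, so the option ``$\alpha_A=\id,\ \alpha_B\ne\id$'' does not exist and every non-identity automorphism acts trivially on $G_B$, forcing $c^{*}(G)=c^{*}(G_A)+|V(G_B)|$, i.e. $\theta(G)=\theta(G_A)+|V(G_B)|$ — exactly the exceptional clause, stated with a non-strict inequality so that it harmlessly overlaps the general max-formula at the boundary $\theta(G_A)+|V(G_B)|=\theta(G_B)+|V(G_A)|$. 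The step I expect to need the most care is the bookkeeping of the dichotomy ``$\alpha$ is the identity on a fixed component (contributing all of its $|V|$ vertices as fixed points) versus non-identity (contributing at most $c^{*}$ of them)'', since it is precisely this asymmetry that creates the $\nu(G)$ correction in~(b) and the exceptional case in~(c); once the block decomposition and the return-map description are in place, the rest is routine optimisation.
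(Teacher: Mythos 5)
Your proposal is correct, but it takes a genuinely different route from the paper's. You work entirely on the automorphism side of the Alternate Definition: you decompose an arbitrary $\alpha\in\aut(G)$ into blocks given by its orbits on the set of components, analyse each block through the return map $\alpha^{\ell}|_{H}$, and run one uniform deficiency count $c(\alpha)=|V(G)|-\sum\delta$ that delivers all three parts. The paper instead proves (a) and (c) on the coloring side: it supposes a non-distinguishing coloring with $q$ colors exists, tallies how many colors each component can receive, and contradicts the definition of $q$; only the lower bound in (b) is obtained, as in your argument, by exhibiting an automorphism and counting its cycles. Your route buys two things: it explicitly covers automorphisms that permute isomorphic components (a case the paper's color-counting for (a) treats only implicitly), and it makes the exceptional clause in (c) transparent, since $\aut(G)=\aut(G_A)\times\aut(G_B)$ and the branch ``$\alpha_A=\id$, $\alpha_B\neq\id$'' simply does not exist when $G_B$ is asymmetric; the paper's argument, in exchange, stays closer to the coloring definition and needs no structure theory of $\aut(G)$. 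One small slip that does not affect your argument: a block consisting of $\ell\geq 2$ components with trivial return map contributes exactly $m$ cycles (each of length $\ell$), so ``attaining $m$ only if $\ell=1$ and $\beta=\id$'' is not literally true; what you actually use later --- that such a block has deficiency at least $(\ell-1)m\geq m>0$ --- is correct.
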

	\begin{proof}
		To prove   (a), without loss of generality, we assume
		\[q = \max_{1\leq i\leq k} \; \left\{\theta(G_i) + \sum_{j \neq i}|V(G_j)| \right\} = \theta(G_1) + \sum_{j = 2}^k|V(G_j)|.\]
		It is obvious that  $ \theta(G)\geq q$. Thus, it  is enough to show that any arbitrary  coloring  of  $G$ with $q$ colors  is distinguishing. Suppose on contrary that $c$ is a non-distinguishing coloring of $G$ with $q$ colors. Then, we may assume that the number of colors which are used in $G_1$ is $\theta(G_1) + r$, where $0 < r \leq |V(G_1)| - \theta(G_1)$, and that the components  $G_{i_1}, \ldots ,G_{i_m}$ receive $\vert V(G_{i_1})\vert-t_1, \ldots, \vert V(G_{i_m})\vert -t_m$  colors, respectively, where $0<t_j < |V(G_{i_j})|$, for $1 \leq j \leq m$, and $t_1 + t_2 + \cdots + t_m = r$. Since $c$ is not distinguishing, we assume, without loss of generality, that $|V(G_{i_1})| - t_1 < \theta(G_{i_1})$. Hence,
		\[|V(G_{i_1})| + |V(G_{i_2})|+ \cdots + |V(G_{i_m})| - (t_1 + \cdots + t_m) < \theta(G_{i_1}) + |V(G_{i_2})| + \cdots + |V(G_{i_m})|;\]
		thus, since $r = t_1 + \cdots + t_m$ and $r \leq |V(G_1)| - \theta(G_1)$, we have 
		\[ |V(G_{i_1})| + |V(G_{i_2})|+ \cdots + |V(G_{i_m})| + \theta(G_1) < \theta(G_{i_1}) + |V(G_{i_2})| + \cdots + |V(G_{i_m})| + |V(G_1)|,\]
		which contradicts the definition of $q$.
		
		To show part (b), we note that if the $G_i$s are mutually non-isomorphic, then by the definition, we have $\nu(G)=|V(G)|$ and the result follows because $G$ is asymmetric. Otherwise, let $\nu(G)=|V(H)|$. Clearly, any coloring with $|V(G)|-|V(H)| +1$ colors  breaks the symmetry of $G$; however, there is an automorphism $\beta$  which  interchanges $H$ with another component and fixes all other vertices. The automorphism $\beta$     has $\vert V(G)\vert-2\vert V(H)\vert$ singleton cycles and $\vert V(H)\vert$ cycles of size 2. Hence  we have $c(\beta)=|V(G)|-|V(H)|$ which completes the proof.
		
		Finally, to prove  (c), we first  assume that  
		\[q = \max \{ \theta(G_A) + |V(G_B)|,\; \theta(G_B) + |V(G_A)| \} = \theta(G_A) + |V(G_B)|.
		\]
		It is easy to see that $ \theta(G)\geq q$. On the other hand, suppose that there exists a non-distinguishing coloring $c$ with $q$ colors. Similar to the proof  of  (a), assume that $G_A$ and  $G_B$ receive $\theta(G_A) + r$ and $|V(G_B)| - r$ colors,  respectively, where  $0< r \leq |V(G_A)| - \theta(G_A)$. Since $c$ is not distinguishing, we have 
		$|V(G_B)| - r < \theta(G_B)$ which implies that 
		\[ |V(G_B)| - |V(G_A)| + \theta(G_A) < \theta(G_B).\]
		Hence, 
		\[|V(G_B)| + \theta(G_A) <  \theta(G_B)+|V(G_A)|,\]
		which is a contradiction and shows that $\theta(G) \leq q$.
		
		Now, assume $q= \theta(G_B) + |V(G_A)|$. If $\theta(G_B) = 1$, then it is obvious that $\theta(G) = \theta(G_A) + |V(G_B)|$ and the case where $\theta(G_B) > 1$ follows using a similar argument as in part (a). 
	\end{proof}

	In the following lemma, we make use of the notion of \textit{motion}, which was introduced by Russell and Sundaram in~\cite{Russell}. The motion $m(\alpha)$ of an automorphism $\alpha\in \aut(G)$ is the number of vertices of $G$ that are not fixed by $\alpha$, and the motion of $G$ is defined as 
	$$m(G) = \min\left\{m(\alpha) \;:\;\alpha \in \aut(G)\setminus \{\id \} \right\}.$$

	\begin{lemma}\label{motion}
		Let $G$ be a graph on $n$ vertices. Then, we have \[\theta(G)\geq n-m(G)+2.\]
	\end{lemma}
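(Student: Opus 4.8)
The plan is to work directly from the alternate description of the distinguishing threshold recorded in~\ref{max-lem}, namely $\theta(G) = 1 + \max\{c(\alpha) : \alpha \in \aut(G)\}$. Thus it suffices to exhibit a \emph{single} automorphism $\beta$ with $c(\beta) \geq n - m(G) + 1$, since then $\theta(G) \geq 1 + c(\beta) \geq n - m(G) + 2$.

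The natural candidate is a minimum-motion automorphism. First I would pick $\beta \in \aut(G) \setminus \{\id\}$ with $m(\beta) = m(G)$; such a $\beta$ exists precisely because $\aut(G) \neq \{\id\}$ (if $G$ were asymmetric the asserted inequality is vacuous, or that case is simply excluded). By the definition of motion, $\beta$ fixes exactly $n - m(\beta) = n - m(G)$ vertices, and each fixed vertex is a cycle of length $1$ in the cycle decomposition of $\beta$. The $m(G)$ non-fixed vertices are partitioned by $\beta$ into cycles of length at least $2$, and since $\beta \neq \id$ there is at least one such cycle. Counting the fixed points together with the non-trivial cycles gives
\[
c(\beta) \;\geq\; \bigl(n - m(G)\bigr) + 1 \;=\; n - m(G) + 1,
\]
and substituting this into~\ref{max-lem} finishes the argument.

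I do not expect a genuine obstacle: the proof is essentially a one-line pigeonhole count. The only points deserving a moment's care are the bookkeeping observation that a minimum-motion automorphism, being non-trivial, necessarily contributes at least one non-trivial cycle (so the ``$+1$'' is legitimate), and the degenerate case $\aut(G)=\{\id\}$. I would also remark in passing that the bound is typically far from tight, since an automorphism whose support is broken into many short cycles contributes well more than the single non-trivial cycle used above; but for the stated estimate that refinement is not needed.
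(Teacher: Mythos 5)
Your proof is correct and, modulo the equivalence recorded in Alternate Definition~\ref{max-lem}, it is essentially the paper's argument: the paper exhibits the non-distinguishing coloring with $n-m(G)+1$ colors that gives each fixed vertex of a minimum-motion automorphism $\beta$ its own color and lumps all moved vertices into one color, which is exactly the coloring your cycle count $c(\beta)\geq n-m(G)+1$ encodes. Your remarks on the asymmetric degenerate case and on the non-trivial cycle contributing the ``$+1$'' are sound.
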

	\begin{proof}
		Let $\beta\in\aut(G)$ with $m(\beta)=m(G)$ and consider a coloring with $n-m(G)+1$ colors which assigns $n-m(G)$ colors to the fixed vertices of $\beta$ and another color to all the  vertices moved by $\beta$. This coloring is not distinguishing and, therefore, the statement follows.
	\end{proof}
	
	Since for an asymmetric graph $G$, we have $\theta(G) = 1$~\cite{ahmadi2020number}, there are infinitely many graphs whose distinguishing threshold is $1$. Therefore, it is a natural question to ask whether, for an integer $k\geq 2$, there are finitely or infinitely many graphs whose distinguishing threshold is $k$. We first consider the case $k=2$ in the following theorem which states that there are only two such graphs. 
	
	\begin{theorem}\label{theta2}
		The only graphs with the distinguishing threshold $2$ are $K_2$ and $\overline{K_2}$.
	\end{theorem}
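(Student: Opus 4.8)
The plan is to read the statement through Definition~\ref{max-lem}: since $\theta(G) = 1 + \max\{c(\alpha) : \alpha\in\aut(G)\}$ and $c(\id)=0$, the equality $\theta(G)=2$ is equivalent to saying that $\aut(G)\neq\{\id\}$ while every non-identity automorphism of $G$ has exactly one cycle. So first I would fix a non-identity $\alpha\in\aut(G)$ with $c(\alpha)=1$. Its unique cycle cannot have length $1$, as that would force $\alpha=\id$; hence $\alpha$ is a single cycle through all $n=|V(G)|$ vertices. By the characterization of circulant graphs recalled in Section~\ref{sec:pre}, $G$ is therefore a circulant graph, and $\alpha$ is a generator of a regular cyclic action.

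Next I would pin down the order of $G$. Because this $\alpha$ attains the maximum of $c$ over $\aut(G)$, Observation~\ref{prime-alpha} applies and gives a prime $p$ with $o(\alpha)=p$, every cycle of $\alpha$ having length $p$ or $1$. Since $\alpha$ is a single $n$-cycle, this forces $n=p$; that is, $G$ has prime order. Now suppose $p$ is odd, so $p\geq 3$. Then $G$ is a circulant graph of odd order $p>2$, so Lemma~\ref{dihedral-graph} yields $D_{2p}\leq\aut(G)$. Any reflection $\rho\in D_{2p}$ fixes exactly one of the $p$ vertices and pairs up the remaining $p-1$ into transpositions, so $c(\rho)=1+\tfrac{p-1}{2}\geq 2$, contradicting $\max\{c(\alpha):\alpha\in\aut(G)\}=1$. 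Hence $p=2$, i.e. $|V(G)|=2$, and $G$ is $K_2$ or $\overline{K_2}$.

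For the converse I would simply note that $\aut(K_2)=\aut(\overline{K_2})\cong\mathbb{Z}_2$, whose non-identity element is a single transposition, so both graphs have $\theta=2$ (alternatively this is the case $n=2$ of $\theta(K_n)=\theta(\overline{K_n})=n$ recorded in the introduction). I do not expect a genuine obstacle here: the only point that needs a moment's care is confirming that Observation~\ref{prime-alpha} is applicable, which holds precisely because the automorphism $\alpha$ we selected satisfies $\theta(G)=c(\alpha)+1$; the remainder is bookkeeping about the cycle type of a dihedral reflection of odd degree.
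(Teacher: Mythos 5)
Your proposal is correct and follows essentially the same route as the paper: use the alternate definition to see that every non-identity automorphism is a single $n$-cycle, deduce that $G$ is circulant of prime order via Observation~\ref{prime-alpha}, and then invoke Lemma~\ref{dihedral-graph} to produce a dihedral reflection with at least two cycles, ruling out all odd primes. The only cosmetic difference is that you conclude $p=2$ directly while the paper frames it as a contradiction under the assumption $|V(G)|>2$; the substance is identical.
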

	\begin{proof}
		The only   graphs on 2 vertices are $K_2$ and its complement $\overline{K_2}$. Suppose $G$ is a graph on more than 2 vertices for which we have $\theta (G)=2$. Then, $G$ must have some non-trivial symmetries and any automorphism $\alpha\neq \mathrm{id}$ must have at most one cycle of length $n= |V(G)|$, which implies that $G$ is circulant. Moreover, by Observation~\ref{prime-alpha},~$n$ must be a prime number greater than 2. Therefore, by Lemma~\ref{dihedral-graph} we must have $D_{2n}\leq \aut(G)$ which means that there is an automorphism $\gamma\in\aut(G)$ which has $\lfloor \frac{n}{2} \rfloor+1>1$ cycles. This contradiction completes the proof.
	\end{proof}

	Characterizing the graphs whose distinguishing threshold is~$3$ seems to be more complicated. There are infinitely many such graphs, but except for some small ones, all such graphs are of a special order.

	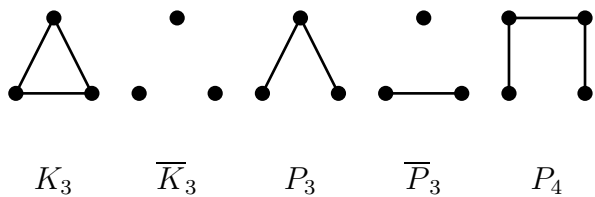
\begin{figure} [h]
		\centering
		\begin{center}
			\begin{tabular}{ c c c c c }
				\begin{tikzpicture}  [scale=0.5]
					
					\tikzstyle{every path}=[line width=1pt]
					
					\newdimen\ms
					\ms=0.1cm
					\tikzstyle{s1}=[color=black,fill,rectangle,inner sep=3]
					\tikzstyle{c1}=[color=black,fill,circle,inner sep={\ms/8},minimum size=2*\ms]
					
					% Define positions of all observables

					\coordinate (a1) at  (1,2);
					\coordinate (a3) at (2,0);
					\coordinate (a5) at (0,0);

					% draw contexts
					
					\draw [color=black] (a1) -- (a3);
					\draw [color=black] (a3) -- (a5);
					\draw [color=black] (a5) -- (a1);

					% draw atoms
					
					\draw (a1) coordinate[c1];
					\draw (a3) coordinate[c1];
					\draw (a5) coordinate[c1];
					
				\end{tikzpicture}
				&
				\begin{tikzpicture}  [scale=0.5]
					
					\tikzstyle{every path}=[line width=1pt]
					
					\newdimen\ms
					\ms=0.1cm
					\tikzstyle{s1}=[color=black,fill,rectangle,inner sep=3]
					\tikzstyle{c1}=[color=black,fill,circle,inner sep={\ms/8},minimum size=2*\ms]
					
					% Define positions of all observables

					\coordinate (a1) at  (1,2);
					\coordinate (a3) at (2,0);
					\coordinate (a5) at (0,0);
					
					% draw contexts
					
					%				\draw [color=olive] (a1) -- (a3);
					%				\draw [color=cyan] (a3) -- (a5);
					%				\draw [color=orange] (a5) -- (a1);

					% draw atoms
					
					\draw (a1) coordinate[c1];
					\draw (a3) coordinate[c1];
					\draw (a5) coordinate[c1];

				\end{tikzpicture}
				&
				\begin{tikzpicture}  [scale=0.5]
					
					\tikzstyle{every path}=[line width=1pt]
					
					\newdimen\ms
					\ms=0.1cm
					\tikzstyle{s1}=[color=black,fill,rectangle,inner sep=3]
					\tikzstyle{c1}=[color=black,fill,circle,inner sep={\ms/8},minimum size=2*\ms]
					
					% Define positions of all observables

					\coordinate (a1) at  (1,2);
					\coordinate (a3) at (2,0);
					\coordinate (a5) at (0,0);
					
					% draw contexts
					
					\draw [color=black] (a1) -- (a3);
					\draw [color=black] (a5) -- (a1);

					% draw atoms
					
					\draw (a1) coordinate[c1];
					\draw (a3) coordinate[c1];;
					\draw (a5) coordinate[c1];

				\end{tikzpicture}
				&
				\begin{tikzpicture}  [scale=0.5]
					
					\tikzstyle{every path}=[line width=1pt]
					
					\newdimen\ms
					\ms=0.1cm
					\tikzstyle{s1}=[color=black,fill,rectangle,inner sep=3]
					\tikzstyle{c1}=[color=black,fill,circle,inner sep={\ms/8},minimum size=2*\ms]
					
					% Define positions of all observables

					\coordinate (a1) at  (1,2);
					\coordinate (a3) at (2,0);
					\coordinate (a5) at (0,0);

					% draw contexts

					\draw [color=black] (a3) -- (a5);

					% draw atoms
					
					\draw (a1) coordinate[c1];
					\draw (a3) coordinate[c1];
					\draw (a5) coordinate[c1];
					
				\end{tikzpicture}
				&
				\begin{tikzpicture}  [scale=0.5]
					
					\tikzstyle{every path}=[line width=1pt]
					
					\newdimen\ms
					\ms=0.1cm
					\tikzstyle{s1}=[color=black,fill,rectangle,inner sep=3]
					\tikzstyle{c1}=[color=black,fill,circle,inner sep={\ms/8},minimum size=2*\ms]
					
					% Define positions of all observables

					\coordinate (a1) at  (0,2);
					\coordinate (a2) at (2,0);
					\coordinate (a3) at (0,0);
					\coordinate (a4) at (2,2);

					% draw contexts
					
					\draw [color=black] (a1) -- (a3);
					\draw [color=black] (a4) -- (a1);
					\draw [color=black] (a4) -- (a2);

					% draw atoms
					
					\draw (a1) coordinate[c1];
					\draw (a2) coordinate[c1];
					\draw (a3) coordinate[c1];
					\draw (a4) coordinate[c1];
					
				\end{tikzpicture}
				\\ \vspace{1mm} \\
				$K_3$&$\overline{K}_3$&$P_3$&$\overline{P}_3$&$P_4$\\
			\end{tabular}

		\end{center}
		\caption{small graphs whose distinguishing threshold equal to 3}
		\label{fig:theta=3}
	\end{figure}

	\begin{theorem}\label{theta3}
		Let $G$ be a graph on $n$ vertices for which we have $\theta(G)=3$. Then, either
		\begin{itemize}
			\item[(a)] $n=3$, or
			\item[(b)] $n=2p$ where $p\neq 3,5$ is a prime number, $G$ is a connected bi-regular graph with $\mathcal{A}_1 =\{v_0,\ldots,v_{p-1}\}$ and $\mathcal{A}_2 =\{u_0,\ldots,u_{p-1} \}$, and the induced subgraphs $G[\mathcal{A}_1]$ and $G[\mathcal{A}_2]$ are non-isomorphic circulant graphs.
		\end{itemize}
	\end{theorem}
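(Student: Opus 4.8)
The plan is to work through the cycle structure of a threshold‑attaining automorphism. Fix $\alpha\in\aut(G)\setminus\{\id\}$ with $c(\alpha)+1=\theta(G)=3$, so $c(\alpha)=2$; by Observation~\ref{prime-alpha}, $\alpha$ has prime order $p$ and every cycle of $\alpha$ has length $p$ or $1$. Hence $\alpha$ is either a $p$‑cycle together with one fixed point (so $n=p+1$) or a product of two $p$‑cycles (so $n=2p$), and these are the two cases to analyse. A second, decisive ingredient is Lemma~\ref{motion}: from $\theta(G)=3$ we get $m(G)\geq n-1$, i.e.\ \emph{every} non‑identity automorphism of $G$ fixes at most one vertex. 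I would use this repeatedly.

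First I would dispose of $n=p+1$. Let $w$ be the fixed vertex of $\alpha$ and put $H=G-w$; then $\alpha|_H$ is a single $p$‑cycle, so $H$ is a circulant graph of order $p$. Since $N_G(w)$ is invariant under the transitive group $\langle\alpha|_H\rangle$, the vertex $w$ is isolated or universal, so passing to $\overline G$ if necessary (same automorphism group and threshold) we may assume $G=H\cup K_1$. If $p\geq 3$ then $\aut(H)\neq\{\id\}$, so Theorem~\ref{union}(c) applies with $G_A=H$ and $G_B=K_1$; since $\theta(H)\leq|V(H)|=p$ we land in the exceptional branch and $\theta(G)=\theta(H)+1$. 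Then $\theta(G)=3$ forces $\theta(H)=2$, whence $|V(H)|=2$ by Theorem~\ref{theta2} --- impossible for $p\geq 3$. So $p=2$ and $n=3$, which is alternative~(a); conversely one checks directly (cf.\ Figure~\ref{fig:theta=3}) that all four graphs on three vertices do have threshold $3$.

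Now suppose $n=2p$; we may assume $p$ is odd, the case $p=2$ (which forces $G\cong P_4$) being settled by a direct check on the graphs with four vertices. Let $\mathcal A,\mathcal B$ be the two bases of $\alpha$, labelled so that $\alpha$ acts as $v_i\mapsto v_{i+1}$ on $\mathcal A=\{v_0,\dots,v_{p-1}\}$ and $u_j\mapsto u_{j+1}$ on $\mathcal B=\{u_0,\dots,u_{p-1}\}$ (indices mod $p$). Because $\alpha$ is an automorphism, $G[\mathcal A]=\mathrm{Cay}(\mathbb Z_p,S_A)$ and $G[\mathcal B]=\mathrm{Cay}(\mathbb Z_p,S_B)$ are circulant, the edges between $\mathcal A$ and $\mathcal B$ are governed by a set $T\subseteq\mathbb Z_p$ via $v_i\sim u_j\iff j-i\in T$, and $G$ is bi‑regular with $\mathcal A$ and $\mathcal B$ consisting of the vertices of degrees $d_1=|S_A|+|T|$ and $d_2=|S_B|+|T|$ (so, once we know $d_1\neq d_2$, the degree classes $\mathcal A_1,\mathcal A_2$ of the statement are exactly $\mathcal A$ and $\mathcal B$). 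Next I would show $G$ is connected by combining Theorem~\ref{union} with Theorem~\ref{theta2}: a disconnected $G$ with two isomorphic components must, by the fixed‑point bound, have exactly two components, each a connected circulant of order $p$, whence Theorem~\ref{union}(a) and Lemma~\ref{dihedral-graph} give $\theta(G)\geq\tfrac{3p+3}{2}>3$; if instead all components are pairwise non‑isomorphic, parts~(a)--(c) of Theorem~\ref{union} together with Theorem~\ref{theta2} force either $\theta(G)=1$ or a component of order $\geq 2p-1$ with threshold $2$, again impossible. The same bookkeeping gives $T\neq\emptyset$ and $T\neq\mathbb Z_p$, the latter because otherwise the inversion of $G[\mathcal A]$ supplied by Lemma~\ref{dihedral-graph}, extended by the identity on $\mathcal B$, would be an automorphism with more than two cycles. (One also notes $G$ cannot be circulant, since a $2p$‑cycle automorphism $\beta$ would have $\beta^p$ equal to a product of $p$ transpositions.)

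It remains to constrain $T$, to rule out $p\in\{3,5\}$, and to establish $d_1\neq d_2$ (hence also $G[\mathcal A]\not\cong G[\mathcal B]$). The key point is that if $T=c-T$ for some $c\in\mathbb Z_p$, then $v_i\mapsto v_{-i}$, $u_j\mapsto u_{c-j}$ is an automorphism with $\tfrac{p+1}{2}+\tfrac{p+1}{2}=p+1\geq 4$ cycles, contradicting $\theta(G)=3$; thus $T$ is not symmetric about any point. Since every nonempty proper subset of $\mathbb Z_3$ and of $\mathbb Z_5$ \emph{is} symmetric about some point, this already yields $p\neq 3,5$. Next, if $S_A=S_B$ then, using $S_A=-S_A$, the map $v_i\mapsto u_{-i}$, $u_j\mapsto v_{-j}$ is an automorphism equal to a product of $p$ transpositions, so it has $p\geq 3$ cycles --- impossible; hence $S_A\neq S_B$. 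To finish one must rule out the remaining possibility that $G$ is regular, i.e.\ $|S_A|=|S_B|$ with $S_A\neq S_B$: if moreover $G[\mathcal A]\cong G[\mathcal B]$, then by the classification of isomorphisms between circulant graphs of prime order $S_B=\lambda S_A$, and one again produces either a part‑swapping involution or a single $2p$‑cycle automorphism (both with at least three cycles) after reconciling $\lambda$ with $T$; while the case $|S_A|=|S_B|$, $G[\mathcal A]\not\cong G[\mathcal B]$ must be excluded by a local‑structure argument showing that $\aut(G)$ still preserves $\{\mathcal A,\mathcal B\}$ and that this, under $\theta(G)=3$, is untenable. Marrying the structure theory of prime‑order circulants to the constraint imposed by $T$, and in particular ruling out the regular subcase, is where I expect the real work to lie.
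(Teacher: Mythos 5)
Your skeleton matches the paper's: the motion bound forces $c(\alpha)\leq 2$ for every non-identity automorphism, the fixed-point case collapses to $n=3$ (you route this through Theorem~\ref{union}(c) and Theorem~\ref{theta2}, where the paper uses a direct coloring of $G\setminus v$; both work), and the two-$p$-cycle case yields the circulant structure on the two bases. Two of your additions are genuine improvements in rigor over the printed proof: you actually verify connectivity (the paper asserts it without argument), and your exclusion of $p=3,5$ --- the cross-edge set $T\subseteq\mathbb{Z}_p$ must satisfy $T\neq c-T$ for every $c$, and no subset of $\mathbb{Z}_3$ or $\mathbb{Z}_5$ has that property --- is a clean uniform argument where the paper falls back on an exhaustive check of the graphs in Figures~\ref{fig:k3k3^} and~\ref{fig:k5^c5}. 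On the other hand, the effort you anticipate spending on $d_1\neq d_2$ is unnecessary: the paper's definition of bi-regular allows $d_1=d_2$, and the theorem identifies $\mathcal{A}_1,\mathcal{A}_2$ with the cycle bases, so only the non-isomorphism of the induced subgraphs needs proof.

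That non-isomorphism is exactly where your proposal stops being a proof. You establish it only when $S_A=S_B$ (via the involution $v_i\mapsto u_{-i}$, $u_j\mapsto v_{-j}$, which has $p\geq 3$ cycles), and you explicitly defer the case $S_B=\lambda S_A$ with $\lambda\neq\pm1$ --- which, by the prime-order classification you invoke, is precisely the remaining way $G[\mathcal{A}_1]\cong G[\mathcal{A}_2]$ can occur --- to a sketch (``after reconciling $\lambda$ with $T$'') with no construction supplied, plus an unexecuted ``local-structure argument.'' Since this is the central claim of part (b), that is a genuine gap. For comparison, the paper derives a contradiction from $G[\mathcal{A}_1]\cong G[\mathcal{A}_2]$ alone by asserting that the same part-swapping map $\eta$ is an automorphism; as you correctly sense, $\eta$ preserves the within-part edges only when $S_A=S_B$, so you have put your finger on a real subtlety rather than missed an easy step --- but your submission does not close it either. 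To finish along your lines you would need, for each multiplier $\lambda$ realizing the isomorphism, to exhibit an automorphism of $G$ with at least three cycles (say, by composing the multiplier map with a translation chosen compatibly with $T$) or to show that no asymmetric $T$ coexists with such a $\lambda$; neither is done.
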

	\begin{proof}
		If $n=3$ then there is nothing to prove. We know that there are eleven graphs on 4 vertices, among which only $P_4$ has the distinguishing threshold 3 (this is indeed easy to check directly or by reasoning), see Figure~\ref{fig:theta=3}. It is straightforward to check that $P_4$ satisfies item (b). Thus we suppose that $G$ is a graph with $\theta (G)=3$ and $n\geq 5$. We prove that the  statement (b) holds for $G$.
		
		Lemma~\ref{motion} implies that $m(G)\geq n-1$. This means that for an arbitrary $\alpha\in \aut (G)\setminus \{\id \}$, we have $m(\alpha)\geq n-1$ and $c(\alpha)\leq 2$. Thus, we have the following two cases.
		
		\begin{itemize}
			\item[Case 1.] $m(\alpha)=n-1$. Since $m(\alpha)\neq n$, it must be the case that $c(\alpha) =2$. Consequently, the only vertex that is fixed by $\alpha$, say $v$, must belong to a separate cycle from the rest of vertices. Thus, either $v$ is on an edge, which means that it has to be adjacent to all other vertices, or $v$ is an isolated vertex. Both cases imply that $\alpha\vert_{G\setminus v}$ is an automorphism for $G\setminus v$. This means that $\theta(G\setminus v)=2$ because if $\theta(G\setminus v) = 3$, then one 3-coloring would involve coloring $v$ with its own color, and $G \setminus v$ with only two colors. Since $v$ is either universal or independent, automorphisms of $G \setminus v$ induce	automorphisms of $G$. So if we choose the 2-coloring of $G \setminus v$ to be non-distinguishing, this 3-coloring of $G$ will also be non-distinguishing, which contradicts our assumption. Therefore, by Theorem~\ref{theta2},  $G\setminus v$ must be isomorphic to either $K_2$ or $\overline{K_2}$. This is a contradiction to the fact that $n\geq 5$.
			
			\item[Case 2.] $m(\alpha)=n$. We split this case, in turn, into the following two sub-cases.
			\begin{itemize}
				\item[Case 2.1.] $\alpha$ is mono-cyclic. Since $n\geq 5$, by Lemma~\ref{dihedral-graph} there is an automorphism $\gamma$ which has at least 3 cycles; this implies that $\theta(G)\geq 4$, a contradiction.
				
				\item[Case 2.2.] $c(\alpha)=2$. Since $\theta(G)=3$, the number of cycles of $\alpha$ is maximum. Thus, by Observation~\ref{prime-alpha} both cycles of $\alpha$ have an equal prime length, say $p$.
			\end{itemize}
		\end{itemize}
		Therefore, we have $c(\alpha)=2$ and the two cycles of $\alpha$ have the same length which is a prime number $p\geq 3$. Without loss of generality we can assume that $\alpha=(v_0,\ldots,v_{p-1})(u_0,\ldots,u_{p-1})$. We set $\mathcal{A}_1 =\{v_0\ldots,v_{p-1}\}$ and $\mathcal{A}_2 =\{u_0,\ldots, u_{p-1}\}$. Since $\mathcal{A}_1\cup \mathcal{A}_2 =V(G)$, $G$ must be bi-regular because the degrees of the vertices in $\mathcal{A}_1$ have to be the equal; the same for the vertices in $\mathcal{A}_2$. Moreover, since $\alpha$ has two cycles, both induced sub-graphs $G[\mathcal{A}_1]$ and $G[\mathcal{A}_2 ]$ are circulant graphs of prime order $p$.
		
		Now, suppose that we have $G[\mathcal{A}_1 ]\cong G[\mathcal{A}_2 ]$. Let $u_i \sim v_j$ for some $i,j=0,\ldots , p-1$. Then, using the automorphism $\alpha^{-i-j}$, we must have $u_{-j}\sim v_{-i}$. This shows that the map
		
		\[\eta: V(G) \longrightarrow V(G)\]
		\[\eta(x)=\left\{ \begin{gathered}
			u_{-i}\hspace{13mm}x=v_i\in\mathcal{A}_1 \\
			v_{-i} \hspace{13mm} x=u_i\in\mathcal{A}_2
		\end{gathered}\right.\]
		is another automorphism of $G$.  Since $n\geq 5$ and $\eta$ has $p\geq 3$ cycles, it means that $\theta(G)\geq 4$, a contradiction. 
		
		Note that there are two circulant graphs on~3 vertices, namely $K_3$ and $\overline{K_3}$, and three circulant graphs on~5 vertices, say $K_5$, $\overline{K_5}$ and $C_5$. Since any combination of non-isomorphic pairs from these graphs as $G[\A_1]$ and $G[\A_2]$ cannot generate a graph $G$ whose distinguishing threshold is~3, we have $p\neq 3,5$. Figures~\ref{fig:k3k3^} and~\ref{fig:k5^c5} illustrate this fact, which completes the proof.
	\end{proof}

	\begin{figure} [h]
		\centering
		\begin{center}
			\begin{tabular}{ c c c c }
				\begin{tikzpicture}  [scale=0.2]
					
					\tikzstyle{every path}=[line width=1pt]
					
					\newdimen\ms
					\ms=0.1cm
					\tikzstyle{s1}=[color=black,fill,rectangle,inner sep=3]
					\tikzstyle{c1}=[color=black,fill,circle,inner sep={\ms/8},minimum size=2*\ms]
					
					% Define positions of all observables

					\coordinate (a1) at  (0,2);
					\coordinate (a2) at (-1.7323,-0.99989);
					\coordinate (a3) at (1.7323,-0.99989);
					\coordinate (a4) at  (0,5);
					\coordinate (a5) at (-4.3302,-2.4997);
					\coordinate (a6) at (4.3302,-2.4497);

					% draw contexts
					
					\draw [color=black] (a1) -- (a2);
					\draw [color=black] (a2) -- (a3);
					\draw [color=black] (a3) -- (a1);

					% draw atoms
					
					\draw (a1) coordinate[c1];
					\draw (a2) coordinate[c1];
					\draw (a3) coordinate[c1];
					\draw (a4) coordinate[c1];
					\draw (a5) coordinate[c1];
					\draw (a6) coordinate[c1];
					
				\end{tikzpicture}
				&
				\begin{tikzpicture}  [scale=0.2]
					
					\tikzstyle{every path}=[line width=1pt]
					
					\newdimen\ms
					\ms=0.1cm
					\tikzstyle{s1}=[color=black,fill,rectangle,inner sep=3]
					\tikzstyle{c1}=[color=black,fill,circle,inner sep={\ms/8},minimum size=2*\ms]
					
					% Define positions of all observables

					\coordinate (a1) at  (0,2);
					\coordinate (a2) at (-1.7323,-0.99989);
					\coordinate (a3) at (1.7323,-0.99989);
					\coordinate (a4) at  (0,5);
					\coordinate (a5) at (-4.3302,-2.4997);
					\coordinate (a6) at (4.3302,-2.4497);

					% draw contexts
					
					\draw [color=black] (a1) -- (a2);
					\draw [color=black] (a2) -- (a3);
					\draw [color=black] (a3) -- (a1);
					\draw [color=black] (a1) -- (a4);
					\draw [color=black] (a2) -- (a5);
					\draw [color=black] (a3) -- (a6);

					% draw atoms
					
					\draw (a1) coordinate[c1];
					\draw (a2) coordinate[c1];
					\draw (a3) coordinate[c1];
					\draw (a4) coordinate[c1];
					\draw (a5) coordinate[c1];
					\draw (a6) coordinate[c1];
					
				\end{tikzpicture}
				&
				\begin{tikzpicture}  [scale=0.2]
					
					\tikzstyle{every path}=[line width=1pt]
					
					\newdimen\ms
					\ms=0.1cm
					\tikzstyle{s1}=[color=black,fill,rectangle,inner sep=3]
					\tikzstyle{c1}=[color=black,fill,circle,inner sep={\ms/8},minimum size=2*\ms]
					
					% Define positions of all observables

					\coordinate (a1) at  (0,2);
					\coordinate (a2) at (-1.7323,-0.99989);
					\coordinate (a3) at (1.7323,-0.99989);
					\coordinate (a4) at  (0,-5);
					\coordinate (a5) at (4.3302,2.4997);
					\coordinate (a6) at (-4.3302,2.4497);

					% draw contexts
					
					\draw [color=black] (a1) -- (a2);
					\draw [color=black] (a2) -- (a3);
					\draw [color=black] (a3) -- (a1);
					\draw [color=black] (a3) -- (a4);
					\draw [color=black] (a1) -- (a5);
					\draw [color=black] (a2) -- (a6);
					\draw [color=black] (a2) -- (a4);
					\draw [color=black] (a3) -- (a5);
					\draw [color=black] (a1) -- (a6);

					% draw atoms
					
					\draw (a1) coordinate[c1];
					\draw (a2) coordinate[c1];
					\draw (a3) coordinate[c1];
					\draw (a4) coordinate[c1];
					\draw (a5) coordinate[c1];
					\draw (a6) coordinate[c1];

				\end{tikzpicture}
				&
				\begin{tikzpicture}  [scale=0.2]
					
					\tikzstyle{every path}=[line width=1pt]
					
					\newdimen\ms
					\ms=0.1cm
					\tikzstyle{s1}=[color=black,fill,rectangle,inner sep=3]
					\tikzstyle{c1}=[color=black,fill,circle,inner sep={\ms/8},minimum size=2*\ms]
					
					% Define positions of all observables

					\coordinate (a1) at  (0,2);
					\coordinate (a2) at (-1.7323,-0.99989);
					\coordinate (a3) at (1.7323,-0.99989);
					\coordinate (a4) at  (0,5);
					\coordinate (a5) at (-4.3302,-2.4997);
					\coordinate (a6) at (4.3302,-2.4497);

					% draw contexts
					
					\draw [color=black] (a1) -- (a2);
					\draw [color=black] (a2) -- (a3);
					\draw [color=black] (a3) -- (a1);
					\draw [color=black] (a1) -- (a4);
					\draw [color=black] (a2) -- (a5);
					\draw [color=black] (a3) -- (a6);
					\draw [color=black] (a2) -- (a4);
					\draw [color=black] (a3) -- (a5);
					\draw [color=black] (a1) -- (a6);
					\draw [color=black] (a3) -- (a4);
					\draw [color=black] (a1) -- (a5);
					\draw [color=black] (a2) -- (a6);

					% draw atoms
					
					\draw (a1) coordinate[c1];
					\draw (a2) coordinate[c1];
					\draw (a3) coordinate[c1];
					\draw (a4) coordinate[c1];
					\draw (a5) coordinate[c1];
					\draw (a6) coordinate[c1];
					
				\end{tikzpicture}

				\\ \vspace{1mm} \\
				$\theta=6$&$\theta=5$&$\theta=5$&$\theta=6$\\
			\end{tabular}

		\end{center}
		\caption{four different graphs for the proof of Theorem~\ref{theta3} for the case  which $G[\A_1]=K_3$ and $G[\A_2]=\overline{K_3}$}
		\label{fig:k3k3^}
	\end{figure}
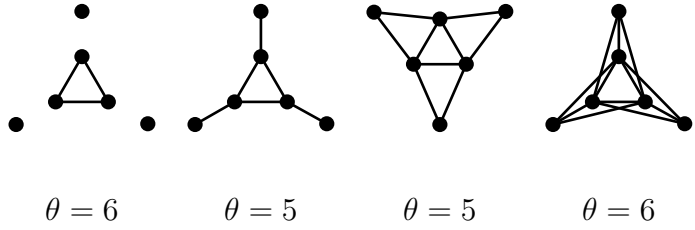

	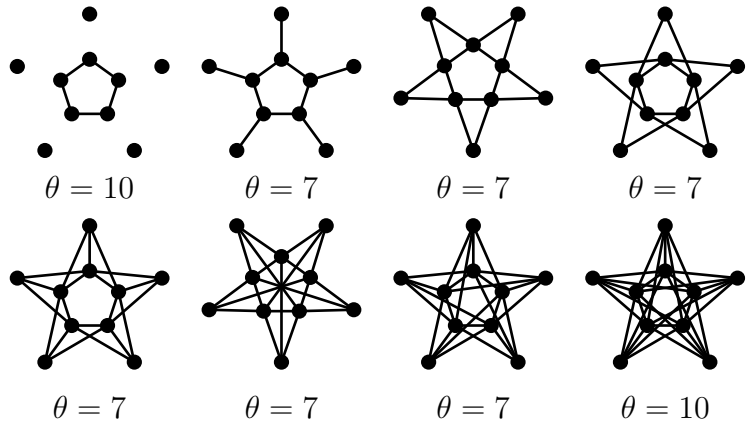
\begin{figure} [h]
		\centering
		\begin{center}
			\begin{tabular}{ c c c c }
				\begin{tikzpicture}  [scale=0.2]
					
					\tikzstyle{every path}=[line width=1pt]
					
					\newdimen\ms
					\ms=0.1cm
					\tikzstyle{s1}=[color=black,fill,rectangle,inner sep=3]
					\tikzstyle{c1}=[color=black,fill,circle,inner sep={\ms/8},minimum size=2*\ms]
					
					% Define positions of all observables

					\coordinate (a1) at  (0,2);
					\coordinate (a2) at (-1.90209,0.6181);
					\coordinate (a3) at (-1.17569,-1.61794);
					\coordinate (a4) at  (1.17569,-1.61794);
					\coordinate (a5) at (1.90209,0.6181);
					\coordinate (a6) at (0,5);
					\coordinate (a7) at  (-4.75522,1.52546);
					\coordinate (a8) at (-2.93922,-4.04486);
					\coordinate (a9) at (2.93922,-4.04486);
					\coordinate (a10) at  (4.75522,1.52546);

					% draw contexts
					
					\draw [color=black] (a1) -- (a2);
					\draw [color=black] (a2) -- (a3);
					\draw [color=black] (a3) -- (a4);
					\draw [color=black] (a4) -- (a5);
					\draw [color=black] (a5) -- (a1);

					% draw atoms
					
					\draw (a1) coordinate[c1];
					\draw (a2) coordinate[c1];
					\draw (a3) coordinate[c1];
					\draw (a4) coordinate[c1];
					\draw (a5) coordinate[c1];
					\draw (a6) coordinate[c1];
					\draw (a7) coordinate[c1];
					\draw (a8) coordinate[c1];
					\draw (a9) coordinate[c1];
					\draw (a10) coordinate[c1];
					
				\end{tikzpicture}
				&
				\begin{tikzpicture}  [scale=0.2]
					
					\tikzstyle{every path}=[line width=1pt]
					
					\newdimen\ms
					\ms=0.1cm
					\tikzstyle{s1}=[color=black,fill,rectangle,inner sep=3]
					\tikzstyle{c1}=[color=black,fill,circle,inner sep={\ms/8},minimum size=2*\ms]
					
					% Define positions of all observables

					\coordinate (a1) at  (0,2);
					\coordinate (a2) at (-1.90209,0.6181);
					\coordinate (a3) at (-1.17569,-1.61794);
					\coordinate (a4) at  (1.17569,-1.61794);
					\coordinate (a5) at (1.90209,0.6181);
					\coordinate (a6) at (0,5);
					\coordinate (a7) at  (-4.75522,1.52546);
					\coordinate (a8) at (-2.93922,-4.04486);
					\coordinate (a9) at (2.93922,-4.04486);
					\coordinate (a10) at  (4.75522,1.52546);

					% draw contexts
					
					\draw [color=black] (a1) -- (a2);
					\draw [color=black] (a2) -- (a3);
					\draw [color=black] (a3) -- (a4);
					\draw [color=black] (a4) -- (a5);
					\draw [color=black] (a5) -- (a1);
					\draw [color=black] (a1) -- (a6);
					\draw [color=black] (a2) -- (a7);
					\draw [color=black] (a3) -- (a8);
					\draw [color=black] (a4) -- (a9);
					\draw [color=black] (a5) -- (a10);

					% draw atoms
					
					\draw (a1) coordinate[c1];
					\draw (a2) coordinate[c1];
					\draw (a3) coordinate[c1];
					\draw (a4) coordinate[c1];
					\draw (a5) coordinate[c1];
					\draw (a6) coordinate[c1];
					\draw (a7) coordinate[c1];
					\draw (a8) coordinate[c1];
					\draw (a9) coordinate[c1];
					\draw (a10) coordinate[c1];
					
				\end{tikzpicture}
				&
				\begin{tikzpicture}  [scale=0.2]
					
					\tikzstyle{every path}=[line width=1pt]
					
					\newdimen\ms
					\ms=0.1cm
					\tikzstyle{s1}=[color=black,fill,rectangle,inner sep=3]
					\tikzstyle{c1}=[color=black,fill,circle,inner sep={\ms/8},minimum size=2*\ms]
					
					% Define positions of all observables
					
					\coordinate (a1) at  (0,2);
					\coordinate (a2) at (-1.90209,0.6181);
					\coordinate (a3) at (-1.17569,-1.61794);
					\coordinate (a4) at  (1.17569,-1.61794);
					\coordinate (a5) at (1.90209,0.6181);
					\coordinate (a6) at (0,-5);
					\coordinate (a7) at  (4.75522,-1.52546);
					\coordinate (a8) at (2.93922,4.04486);
					\coordinate (a9) at (-2.93922,4.04486);
					\coordinate (a10) at  (-4.75522,-1.52546);

					% draw contexts
					
					\draw [color=black] (a1) -- (a2);
					\draw [color=black] (a2) -- (a3);
					\draw [color=black] (a3) -- (a4);
					\draw [color=black] (a4) -- (a5);
					\draw [color=black] (a5) -- (a1);
					\draw [color=black] (a3) -- (a6);
					\draw [color=black] (a4) -- (a7);
					\draw [color=black] (a5) -- (a8);
					\draw [color=black] (a1) -- (a9);
					\draw [color=black] (a2) -- (a10);
					\draw [color=black] (a4) -- (a6);
					\draw [color=black] (a5) -- (a7);
					\draw [color=black] (a1) -- (a8);
					\draw [color=black] (a2) -- (a9);
					\draw [color=black] (a3) -- (a10);

					% draw atoms
					
					\draw (a1) coordinate[c1];
					\draw (a2) coordinate[c1];
					\draw (a3) coordinate[c1];
					\draw (a4) coordinate[c1];
					\draw (a5) coordinate[c1];
					\draw (a6) coordinate[c1];
					\draw (a7) coordinate[c1];
					\draw (a8) coordinate[c1];
					\draw (a9) coordinate[c1];
					\draw (a10) coordinate[c1];
					
				\end{tikzpicture}
				&
				\begin{tikzpicture}  [scale=0.2]
					
					\tikzstyle{every path}=[line width=1pt]
					
					\newdimen\ms
					\ms=0.1cm
					\tikzstyle{s1}=[color=black,fill,rectangle,inner sep=3]
					\tikzstyle{c1}=[color=black,fill,circle,inner sep={\ms/8},minimum size=2*\ms]
					
					% Define positions of all observables

					\coordinate (a1) at  (0,2);
					\coordinate (a2) at (-1.90209,0.6181);
					\coordinate (a3) at (-1.17569,-1.61794);
					\coordinate (a4) at  (1.17569,-1.61794);
					\coordinate (a5) at (1.90209,0.6181);
					\coordinate (a6) at (0,5);
					\coordinate (a7) at  (-4.75522,1.52546);
					\coordinate (a8) at (-2.93922,-4.04486);
					\coordinate (a9) at (2.93922,-4.04486);
					\coordinate (a10) at  (4.75522,1.52546);

					% draw contexts
					
					\draw [color=black] (a1) -- (a2);
					\draw [color=black] (a2) -- (a3);
					\draw [color=black] (a3) -- (a4);
					\draw [color=black] (a4) -- (a5);
					\draw [color=black] (a5) -- (a1);
					\draw [color=black] (a5) -- (a6);
					\draw [color=black] (a1) -- (a7);
					\draw [color=black] (a2) -- (a8);
					\draw [color=black] (a3) -- (a9);
					\draw [color=black] (a4) -- (a10);
					\draw [color=black] (a2) -- (a6);
					\draw [color=black] (a3) -- (a7);
					\draw [color=black] (a4) -- (a8);
					\draw [color=black] (a5) -- (a9);
					\draw [color=black] (a1) -- (a10);

					% draw atoms
					
					\draw (a1) coordinate[c1];
					\draw (a2) coordinate[c1];
					\draw (a3) coordinate[c1];
					\draw (a4) coordinate[c1];
					\draw (a5) coordinate[c1];
					\draw (a6) coordinate[c1];
					\draw (a7) coordinate[c1];
					\draw (a8) coordinate[c1];
					\draw (a9) coordinate[c1];
					\draw (a10) coordinate[c1];
					
				\end{tikzpicture}

				\\ \vspace{1mm}
				$\theta=10$&$\theta=7$&$\theta=7$&$\theta=7$\\
				\vspace{1mm} 

				\begin{tikzpicture}  [scale=0.2]
					
					\tikzstyle{every path}=[line width=1pt]
					
					\newdimen\ms
					\ms=0.1cm
					\tikzstyle{s1}=[color=black,fill,rectangle,inner sep=3]
					\tikzstyle{c1}=[color=black,fill,circle,inner sep={\ms/8},minimum size=2*\ms]
					
					% Define positions of all observables
					
					\coordinate (a1) at  (0,2);
					\coordinate (a2) at (-1.90209,0.6181);
					\coordinate (a3) at (-1.17569,-1.61794);
					\coordinate (a4) at  (1.17569,-1.61794);
					\coordinate (a5) at (1.90209,0.6181);
					\coordinate (a6) at (0,5);
					\coordinate (a7) at  (-4.75522,1.52546);
					\coordinate (a8) at (-2.93922,-4.04486);
					\coordinate (a9) at (2.93922,-4.04486);
					\coordinate (a10) at  (4.75522,1.52546);

					% draw contexts
					
					\draw [color=black] (a1) -- (a2);
					\draw [color=black] (a2) -- (a3);
					\draw [color=black] (a3) -- (a4);
					\draw [color=black] (a4) -- (a5);
					\draw [color=black] (a5) -- (a1);
					\draw [color=black] (a5) -- (a6);
					\draw [color=black] (a1) -- (a7);
					\draw [color=black] (a2) -- (a8);
					\draw [color=black] (a3) -- (a9);
					\draw [color=black] (a4) -- (a10);
					\draw [color=black] (a2) -- (a6);
					\draw [color=black] (a3) -- (a7);
					\draw [color=black] (a4) -- (a8);
					\draw [color=black] (a5) -- (a9);
					\draw [color=black] (a1) -- (a10);
					\draw [color=black] (a2) -- (a7);
					\draw [color=black] (a3) -- (a8);
					\draw [color=black] (a4) -- (a9);
					\draw [color=black] (a5) -- (a10);
					\draw [color=black] (a1) -- (a6);

					% draw atoms
					
					\draw (a1) coordinate[c1];
					\draw (a2) coordinate[c1];
					\draw (a3) coordinate[c1];
					\draw (a4) coordinate[c1];
					\draw (a5) coordinate[c1];
					\draw (a6) coordinate[c1];
					\draw (a7) coordinate[c1];
					\draw (a8) coordinate[c1];
					\draw (a9) coordinate[c1];
					\draw (a10) coordinate[c1];
					
				\end{tikzpicture}
				&
				\begin{tikzpicture}  [scale=0.2]
					
					\tikzstyle{every path}=[line width=1pt]
					
					\newdimen\ms
					\ms=0.1cm
					\tikzstyle{s1}=[color=black,fill,rectangle,inner sep=3]
					\tikzstyle{c1}=[color=black,fill,circle,inner sep={\ms/8},minimum size=2*\ms]
					
					% Define positions of all observables
					
					\coordinate (a1) at  (0,2);
					\coordinate (a2) at (-1.90209,0.6181);
					\coordinate (a3) at (-1.17569,-1.61794);
					\coordinate (a4) at  (1.17569,-1.61794);
					\coordinate (a5) at (1.90209,0.6181);
					\coordinate (a6) at (0,-5);
					\coordinate (a7) at  (4.75522,-1.52546);
					\coordinate (a8) at (2.93922,4.04486);
					\coordinate (a9) at (-2.93922,4.04486);
					\coordinate (a10) at  (-4.75522,-1.52546);

					% draw contexts
					
					\draw [color=black] (a1) -- (a2);
					\draw [color=black] (a2) -- (a3);
					\draw [color=black] (a3) -- (a4);
					\draw [color=black] (a4) -- (a5);
					\draw [color=black] (a5) -- (a1);
					\draw [color=black] (a4) -- (a6);
					\draw [color=black] (a5) -- (a7);
					\draw [color=black] (a3) -- (a8);
					\draw [color=black] (a4) -- (a9);
					\draw [color=black] (a5) -- (a10);
					\draw [color=black] (a3) -- (a6);
					\draw [color=black] (a4) -- (a7);
					\draw [color=black] (a5) -- (a8);
					\draw [color=black] (a1) -- (a9);
					\draw [color=black] (a2) -- (a10);
					\draw [color=black] (a2) -- (a7);
					\draw [color=black] (a1) -- (a8);
					\draw [color=black] (a2) -- (a9);
					\draw [color=black] (a3) -- (a10);
					\draw [color=black] (a1) -- (a6);

					% draw atoms
					
					\draw (a1) coordinate[c1];
					\draw (a2) coordinate[c1];
					\draw (a3) coordinate[c1];
					\draw (a4) coordinate[c1];
					\draw (a5) coordinate[c1];
					\draw (a6) coordinate[c1];
					\draw (a7) coordinate[c1];
					\draw (a8) coordinate[c1];
					\draw (a9) coordinate[c1];
					\draw (a10) coordinate[c1];
				\end{tikzpicture}
				&
				\begin{tikzpicture}  [scale=0.2]
					
					\tikzstyle{every path}=[line width=1pt]
					
					\newdimen\ms
					\ms=0.1cm
					\tikzstyle{s1}=[color=black,fill,rectangle,inner sep=3]
					\tikzstyle{c1}=[color=black,fill,circle,inner sep={\ms/8},minimum size=2*\ms]
					
					% Define positions of all observables
					
					\coordinate (a1) at  (0,2);
					\coordinate (a2) at (-1.90209,0.6181);
					\coordinate (a3) at (-1.17569,-1.61794);
					\coordinate (a4) at  (1.17569,-1.61794);
					\coordinate (a5) at (1.90209,0.6181);
					\coordinate (a6) at (0,5);
					\coordinate (a7) at  (-4.75522,1.52546);
					\coordinate (a8) at (-2.93922,-4.04486);
					\coordinate (a9) at (2.93922,-4.04486);
					\coordinate (a10) at  (4.75522,1.52546);

					% draw contexts
					
					\draw [color=black] (a1) -- (a2);
					\draw [color=black] (a2) -- (a3);
					\draw [color=black] (a3) -- (a4);
					\draw [color=black] (a4) -- (a5);
					\draw [color=black] (a5) -- (a1);
					\draw [color=black] (a5) -- (a6);
					\draw [color=black] (a1) -- (a7);
					\draw [color=black] (a2) -- (a8);
					\draw [color=black] (a3) -- (a9);
					\draw [color=black] (a4) -- (a10);
					\draw [color=black] (a2) -- (a6);
					\draw [color=black] (a3) -- (a7);
					\draw [color=black] (a4) -- (a8);
					\draw [color=black] (a5) -- (a9);
					\draw [color=black] (a1) -- (a10);
					\draw [color=black] (a2) -- (a7);
					\draw [color=black] (a3) -- (a8);
					\draw [color=black] (a4) -- (a9);
					\draw [color=black] (a5) -- (a10);
					\draw [color=black] (a1) -- (a6);
					\draw [color=black] (a6) -- (a3);
					\draw [color=black] (a7) -- (a4);
					\draw [color=black] (a8) -- (a5);
					\draw [color=black] (a9) -- (a1);
					\draw [color=black] (a10) -- (a2);

					% draw atoms
					
					\draw (a1) coordinate[c1];
					\draw (a2) coordinate[c1];
					\draw (a3) coordinate[c1];
					\draw (a4) coordinate[c1];
					\draw (a5) coordinate[c1];
					\draw (a6) coordinate[c1];
					\draw (a7) coordinate[c1];
					\draw (a8) coordinate[c1];
					\draw (a9) coordinate[c1];
					\draw (a10) coordinate[c1];
					
				\end{tikzpicture}
				&
				\begin{tikzpicture}  [scale=0.2]
					
					\tikzstyle{every path}=[line width=1pt]
					
					\newdimen\ms
					\ms=0.1cm
					\tikzstyle{s1}=[color=black,fill,rectangle,inner sep=3]
					\tikzstyle{c1}=[color=black,fill,circle,inner sep={\ms/8},minimum size=2*\ms]
					
					% Define positions of all observables
					
					\coordinate (a1) at  (0,2);
					\coordinate (a2) at (-1.90209,0.6181);
					\coordinate (a3) at (-1.17569,-1.61794);
					\coordinate (a4) at  (1.17569,-1.61794);
					\coordinate (a5) at (1.90209,0.6181);
					\coordinate (a6) at (0,5);
					\coordinate (a7) at  (-4.75522,1.52546);
					\coordinate (a8) at (-2.93922,-4.04486);
					\coordinate (a9) at (2.93922,-4.04486);
					\coordinate (a10) at  (4.75522,1.52546);

					% draw contexts
					
					\draw [color=black] (a1) -- (a2);
					\draw [color=black] (a2) -- (a3);
					\draw [color=black] (a3) -- (a4);
					\draw [color=black] (a4) -- (a5);
					\draw [color=black] (a5) -- (a1);
					\draw [color=black] (a5) -- (a6);
					\draw [color=black] (a1) -- (a7);
					\draw [color=black] (a2) -- (a8);
					\draw [color=black] (a3) -- (a9);
					\draw [color=black] (a4) -- (a10);
					\draw [color=black] (a2) -- (a6);
					\draw [color=black] (a3) -- (a7);
					\draw [color=black] (a4) -- (a8);
					\draw [color=black] (a5) -- (a9);
					\draw [color=black] (a1) -- (a10);
					\draw [color=black] (a2) -- (a7);
					\draw [color=black] (a3) -- (a8);
					\draw [color=black] (a4) -- (a9);
					\draw [color=black] (a5) -- (a10);
					\draw [color=black] (a1) -- (a6);
					\draw [color=black] (a6) -- (a3);
					\draw [color=black] (a7) -- (a4);
					\draw [color=black] (a8) -- (a5);
					\draw [color=black] (a9) -- (a1);
					\draw [color=black] (a10) -- (a2);
					\draw [color=black] (a6) -- (a4);
					\draw [color=black] (a7) -- (a5);
					\draw [color=black] (a8) -- (a1);
					\draw [color=black] (a9) -- (a2);
					\draw [color=black] (a10) -- (a3);

					% draw atoms
					
					\draw (a1) coordinate[c1];
					\draw (a2) coordinate[c1];
					\draw (a3) coordinate[c1];
					\draw (a4) coordinate[c1];
					\draw (a5) coordinate[c1];
					\draw (a6) coordinate[c1];
					\draw (a7) coordinate[c1];
					\draw (a8) coordinate[c1];
					\draw (a9) coordinate[c1];
					\draw (a10) coordinate[c1];

				\end{tikzpicture}

				\\ \vspace{1mm} 
				$\theta=7$&$\theta=7$&$\theta=7$&$\theta=10$

			\end{tabular}

		\end{center}
		\caption{eight different  graphs for the proof of Theorem~\ref{theta3} for the case which $G[\A_1]=C_5$ and $G[\A_2]=\overline{K_5}$; the case which $G[\A_1]=C_5$ and $G[\A_2]=K_5$ is   similar. When $G[\A_1]=K_5$ and $G[\A_2]=\overline{K_5}$ the situation is also similar, but there are only five different  such graphs.}
		\label{fig:k5^c5}
	\end{figure}

	According to Theorem~\ref{theta3}, there might exist numerous graphs whose distinguishing thresholds are equal to~ $3$. Indeed, we can construct an infinite family of such graphs. The following example demonstrate how this can be done.
	
	\begin{example}
		It is easy to show that every non-trivial automorphism of the graph $G$ in Figure~\ref{theta=3} has only two cycles: one whose base is  the $7$ outer vertices and the other whose base is  the $7$ inner ones. Moreover, it is not hard to show that $\aut (G) \cong \mathbb{Z}_7$. %The way each outer vertex is adjacent to the inner ones induces a 2-distinguishing coloring on them, where one colors the inner vertices   adjacent to the outer one by color~1 and the remaining vertices by color~2. 
		In a similar way and by replacing $7$ by any prime number $p>7$, one can construct graphs with the similar property. Therefore, there are infinitely many graphs with the distinguishing threshold~$3$.

		\begin{figure}[h!]
			\centering
			\begin{tikzpicture}  [scale=0.6]
				
				\tikzstyle{every path}=[line width=1pt]
				
				\newdimen\ms
				\ms=0.1cm
				\tikzstyle{s1}=[color=black,fill,rectangle,inner sep=3]
				\tikzstyle{c1}=[color=black,fill,circle,inner sep={\ms/8},minimum size=2*\ms]
				
				% Define positions of all observables

				\coordinate (a1) at  (0,2);
				\coordinate (a2) at (1.5636,1.2447);%first layer
				\coordinate (a3) at (1.9499,-0.4449);%1st
				\coordinate (a4) at (0.8679,-1.8019);%first layer
				\coordinate (a5) at (-0.8676,-1.8020);%5th
				\coordinate (a6) at (-1.9498,-0.4453);%5th
				\coordinate (a7) at (-1.5639,1.2447);%center
				\coordinate (a8) at (0,4);%first layer
				\coordinate (a9) at (3.1272,2.4940);%2nd
				\coordinate (a10) at (3.8997,-0.8899);%2nd
				\coordinate (a11) at (1.7358,-3.6037);%first layer
				\coordinate (a12) at (-1.7351,-3.6040);%3rd
				\coordinate (a13) at (-3.8996,-0.8906);%3rd
				\coordinate (a14) at (-3.1277,2.3945);%3rd
				
				\draw [color=black] (a1) -- (a2);
				\draw [color=black] (a2) -- (a3);
				\draw [color=black] (a3) -- (a4);
				\draw [color=black] (a4) -- (a5);
				\draw [color=black] (a5) -- (a6);
				\draw [color=black] (a6) -- (a7);
				\draw [color=black] (a1) -- (a7);
				\draw [color=black] (a8) -- (a1);
				\draw [color=black] (a8) -- (a2);
				\draw [color=black] (a8) -- (a4);
				\draw [color=black] (a9) -- (a2);
				\draw [color=black] (a9) -- (a3);
				\draw [color=black] (a9) -- (a5);
				\draw [color=black] (a10) -- (a3);
				\draw [color=black] (a10) -- (a4);
				\draw [color=black] (a10) -- (a6);
				\draw [color=black] (a11) -- (a4);
				\draw [color=black] (a11) -- (a5);
				\draw [color=black] (a11) -- (a7);
				\draw [color=black] (a12) -- (a5);
				\draw [color=black] (a12) -- (a6);
				\draw [color=black] (a12) -- (a1);
				\draw [color=black] (a13) -- (a6);
				\draw [color=black] (a13) -- (a7);
				\draw [color=black] (a13) -- (a2);
				\draw [color=black] (a14) -- (a7);
				\draw [color=black] (a14) -- (a1);
				\draw [color=black] (a14) -- (a3);
				% draw atoms
				
				\draw (a1) coordinate[c1];
				\draw (a2) coordinate[c1];
				\draw (a3) coordinate[c1];
				\draw (a4) coordinate[c1];
				\draw (a5) coordinate[c1];
				\draw (a6) coordinate[c1];
				\draw (a7) coordinate[c1];
				\draw (a8) coordinate[c1];
				\draw (a9) coordinate[c1];
				\draw (a10) coordinate[c1];
				\draw (a11) coordinate[c1];
				\draw (a12) coordinate[c1];
				\draw (a13) coordinate[c1];
				\draw (a14) coordinate[c1];
			\end{tikzpicture}
			\caption{a graph on 14 vertices whose distinguishing threshold equals 3}
			\label{theta=3}
		\end{figure}
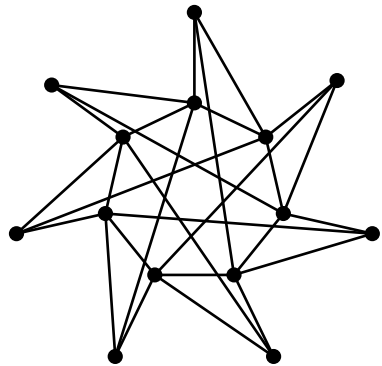
	\end{example}
	
	Furthermore, in the following example,  we extend this argument to all thresholds  $t\geq 4$.
	
	\begin{example}
		The graphs $G_1$  and $G_2$ in Figures~\ref{fig:theta=4} and~\ref{fig:theta=5}, respectively, have the property that their automorphism group is isomorphic to $\mathbb{Z}_p$ where $p\geq 3$ is a prime number, and we have $\theta(G_1)=4$ and $\theta(G_2)=5$. Similar to $G_1$, one can construct infinitely many graphs with threshold equal to $4$. Also, the graph $G_2$ can easily be used to generate a graph whose distinguishing threshold is $t\geq 5$. If we make the central vertex $o$ adjacent with an end vertex of a path of length $t$ to generate a new graph $G_{t+5}$, then we have $\theta (G_{t+5})=t+5$.
	\end{example}

	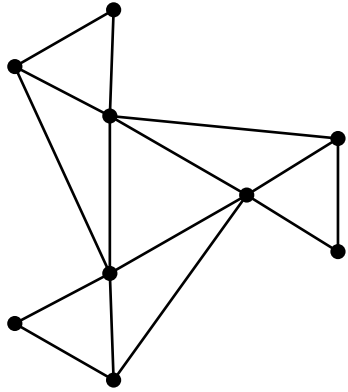
\begin{figure}[h!]
		\centering
		\begin{center}
			
			\begin{tikzpicture}  [scale=0.3]
				
				\tikzstyle{every path}=[line width=1pt]
				
				\newdimen\ms
				\ms=0.1cm
				\tikzstyle{s1}=[color=black,fill,rectangle,inner sep=3]
				\tikzstyle{c1}=[color=black,fill,circle,inner sep={\ms/8},minimum size=2*\ms]
				
				% Define positions of all observables
				
				\coordinate (a1) at  (8,-2.5);%1st
				\coordinate (a3) at (4,0);%first layer
				\coordinate (a5) at (8,2.5);%1st
				\coordinate (a6) at (-1.9998,3.4942);%first layer
				\coordinate (a7) at (-1.8343,8.1873);%2nd
				\coordinate (a8) at (-6.1648,5.6785);%2nd
				\coordinate (a9) at (-2.0004,-3.4638);%First layer
				\coordinate (a10) at (-6.1658,-5.6773);
				\coordinate (a11) at (-1.8358,-8.178);
				% draw contexts
				
				\draw [color=black] (a1) -- (a3);
				\draw [color=black] (a3) -- (a5);
				\draw [color=black] (a5) -- (a1);
				\draw [color=black] (a6) -- (a5);
				\draw [color=black] (a6) -- (a3);
				\draw [color=black] (a6) -- (a7);
				\draw [color=black] (a8) -- (a7);
				\draw [color=black] (a6) -- (a8);
				\draw [color=black] (a9) -- (a8);
				\draw [color=black] (a6) -- (a9);
				\draw [color=black] (a9) -- (a3);
				\draw [color=black] (a9) -- (a10);
				\draw [color=black] (a9) -- (a11);
				\draw [color=black] (a11) -- (a10);
				\draw [color=black] (a3) -- (a11);
				% draw atoms
				
				\draw (a1) coordinate[c1];
				\draw (a3) coordinate[c1];
				\draw (a5) coordinate[c1];
				\draw (a6) coordinate[c1];
				\draw (a7) coordinate[c1];
				\draw (a8) coordinate[c1];
				\draw (a9) coordinate[c1];
				\draw (a10) coordinate[c1];
				\draw (a11) coordinate[c1];
			\end{tikzpicture}
			
		\end{center}
		\caption{graph $G_1$ whose distinguishing threshold is~ 4}
		\label{fig:theta=4}
	\end{figure}

	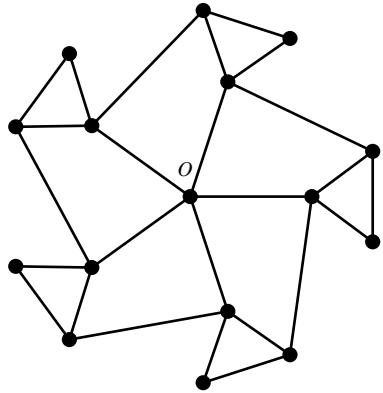
\begin{figure}[h!]
		\centering
		\begin{tikzpicture}  [scale=0.4]
			
			\tikzstyle{every path}=[line width=1pt]
			
			\newdimen\ms
			\ms=0.1cm
			\tikzstyle{s1}=[color=black,fill,rectangle,inner sep=3]
			\tikzstyle{c1}=[color=black,fill,circle,inner sep={\ms/8},minimum size=2*\ms]
			
			% Define positions of all observables

			\coordinate (a1) at  (6,-1.5);%1st
			\coordinate (a3) at (4,0);%first layer
			\coordinate (a5) at (6,1.5);%1st
			\coordinate (a6) at (1.2362,-3.8044);%first layer
			\coordinate (a7) at (3.28,-5.2432);%5th
			\coordinate (a8) at (0.4265,-6.1699);%5th
			\coordinate (a9) at (0,0);%center
			\coordinate (a10) at (1.2362,3.8041);%first layer
			\coordinate (a11) at (3.2810,5.2426);%2nd
			\coordinate (a12) at (0.4276,6.1698);%2nd
			\coordinate (a13) at (-3.2358,2.3513);%first layer
			\coordinate (a14) at (-3.9720,4.7406);%3rd
			\coordinate (a15) at (-5.7356,2.3135);%3rd
			\coordinate (a16) at (-3.2358,-2.3513);%first layer
			\coordinate (a17) at (-5.7361,-2.3124);%4th
			\coordinate (a18) at (-3.9728,-4.7399);%4th
			% draw contexts
			
			\draw [color=black] (a1) -- (a3);
			\draw [color=black] (a3) -- (a5);
			\draw [color=black] (a5) -- (a1);
			\draw [color=black] (a6) -- (a7);
			\draw [color=black] (a6) -- (a8);
			\draw [color=black] (a8) -- (a7);
			\draw [color=black] (a3) -- (a7);
			\draw [color=black] (a9) -- (a6);
			\draw [color=black] (a9) -- (a3);
			\draw [color=black] (a10) -- (a9);
			\draw [color=black] (a10) -- (a5);
			\draw [color=black] (a10) -- (a11);
			\draw [color=black] (a12) -- (a11);
			\draw [color=black] (a10) -- (a12);
			\draw [color=black] (a9) -- (a13);
			\draw [color=black] (a12) -- (a13);
			\draw [color=black] (a14) -- (a13);
			\draw [color=black] (a15) -- (a13);
			\draw [color=black] (a14) -- (a15);
			\draw [color=black] (a16) -- (a15);
			\draw [color=black] (a16) -- (a9);
			\draw [color=black] (a17) -- (a16);
			\draw [color=black] (a18) -- (a16);
			\draw [color=black] (a18) -- (a17);
			\draw [color=black] (a18) -- (a6);
			% draw atoms
			
			\draw (a1) coordinate[c1];
			\draw (a3) coordinate[c1];
			\draw (a5) coordinate[c1];
			\draw (a6) coordinate[c1];
			\draw (a7) coordinate[c1];
			\draw (a8) coordinate[c1];
			\draw (a9) coordinate[c1,label=above:$o\textnormal{ }$];
			\draw (a10) coordinate[c1];
			\draw (a11) coordinate[c1];
			\draw (a12) coordinate[c1];
			\draw (a13) coordinate[c1];
			\draw (a14) coordinate[c1];
			\draw (a15) coordinate[c1];
			\draw (a16) coordinate[c1];
			\draw (a17) coordinate[c1];
			\draw (a18) coordinate[c1];
		\end{tikzpicture}

		\caption{graph $G_2$ whose distinguishing threshold is 5}
		\label{fig:theta=5}
	\end{figure}

	We recall that for any graph $G$, the threshold is bounded $D(G)\leq \theta(G)\leq |V(G)|$, where the second bound follows from the fact that any coloring which  uses as many colors as $|V(G)|$ is trivially distinguishing. In what follows, we study the graphs for which the lower bound holds with equality. More specifically, we prove that the lower bound holds with equality only if the threshold is trivial unless the graph is asymmetric.

	\begin{theorem}\label{D=theta}
		If for a graph $G$, we have $\theta(G)=D(G)$, then either $\theta(G)=1$ or $\theta(G)=|V(G)|$.
	\end{theorem}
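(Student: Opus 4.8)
I would argue by contradiction. The case $\theta(G)=1$ is exactly the asymmetric case, where $D(G)=1$ as well, so assume $\theta(G)=D(G)=k\ge 2$ and, aiming for a contradiction, that $k\le n-1$, where $n=|V(G)|$. The plan is to exhibit a distinguishing coloring of $G$ using only $k-1$ colors, which contradicts $D(G)=k$.

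The first step is to reduce the whole question to a statement about set partitions. By Alternate Definition~\ref{max-lem} every non-identity automorphism has at most $k-1$ cycles, so if a $(k-1)$-coloring is \emph{not} distinguishing, some non-identity $\gamma\in\aut(G)$ preserves it; its $k-1$ color classes are unions of cycles of $\gamma$, and since $c(\gamma)\le k-1$, each class must be a single cycle of $\gamma$. Hence the color partition is precisely the cycle partition of a non-identity automorphism with $c(\gamma)=k-1$ cycles, and by Observation~\ref{prime-alpha} its block sizes form a multiset $1^{f}p^{m}$ with $p$ prime and $m\ge 1$. Consequently, \emph{any} partition of $V(G)$ into $k-1$ blocks whose block-size multiset is not of that shape induces a distinguishing $(k-1)$-coloring, and it suffices to produce one such ``forbidden-shape'' partition.

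Let $d=n-k+1\ge 2$. If $d\ge 3$ this is routine: first $k\ge 3$ (otherwise $\theta(G)=2$ and Theorem~\ref{theta2} forces $n=2$, so $d=1$), and then the partition with block sizes $d,2,1,\dots,1$ (with $k-3$ ones, summing to $n$) has two non-singleton blocks of \emph{distinct} sizes $d\ne 2$, so it cannot be any non-identity automorphism's cycle partition. The genuinely delicate case is $d=2$, i.e.\ $n=k+1$: now the only $(k-1)$-block partitions have block sizes $1^{k-2}3^{1}$ or $1^{k-3}2^{2}$, both of the allowed shape, so the typing obstruction is useless and one must push the hypothesis into the structure of $\aut(G)$. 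If no $(k-1)$-coloring were distinguishing, then in particular for every $3$-subset $\{a,b,c\}$ the partition $\{\{a,b,c\}\}\cup\{\{v\}:v\notin\{a,b,c\}\}$ would be a cycle partition of some automorphism, forcing a $3$-cycle on $\{a,b,c\}$ (and hence $(a\,b\,c)$) into $\aut(G)$. Since $3$-cycles generate $A_n$ and $n=k+1\ge 4$, this gives $A_n\le\aut(G)$; but $A_n$ is $2$-transitive, so $G$ is complete or edgeless, whence $\aut(G)=S_n$ and $\theta(G)=n=k+1\ne k$ — a contradiction.

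Thus in every case $D(G)\le k-1$, contradicting $D(G)=k$; hence $n\le k$, and since $\theta(G)\le n$ always, we get $\theta(G)=D(G)=n$. The one substantive step is the case $d=2$: there the block-size obstruction vanishes, and the key observation is that assuming \emph{no} distinguishing $(k-1)$-coloring exists floods $\aut(G)$ with all $3$-cycles, forcing it up to $A_n$ and then, via $2$-transitivity, to $S_n$, which is incompatible with $\theta(G)<n$.
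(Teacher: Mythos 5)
Your proof is correct, and it takes a genuinely different route from the paper's. The paper fixes an automorphism $\alpha$ attaining the maximum number of cycles, writes out its cycle decomposition into $r$ non-trivial cycles and $s$ fixed points, and then runs a case analysis on $(r,s)$ and on the prime $p_\alpha$, in each case hand-crafting a $(q-1)$-coloring whose preservation would force an automorphism with more than $q-1$ cycles. You instead extract a single structural criterion up front: because $\theta(G)=k$, any non-distinguishing surjective $(k-1)$-coloring must have its colour classes coincide \emph{exactly} with the cycles of an automorphism attaining $c(\gamma)=k-1$, so by Observation~\ref{prime-alpha} its block-size multiset is forced to be $1^{f}p^{m}$ for a single prime $p$. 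This reduces the problem to exhibiting one $(k-1)$-block partition of the wrong shape, which the sizes $d,2,1,\dots,1$ with $d=n-k+1\geq 3$ provide immediately (your reduction to $k\geq 3$ via Theorem~\ref{theta2} is needed and correctly supplied). Only the boundary case $k=n-1$ survives the shape obstruction, and there your escalation --- every $3$-set coloring forces a $3$-cycle into $\aut(G)$, the $3$-cycles generate $A_n$, and $2$-transitivity of $A_n$ for $n\geq 4$ forces $G$ complete or edgeless, whence $\theta(G)=n\neq k$ --- is a clean substitute for the paper's most delicate sub-cases. Your argument buys brevity and a reusable ``forbidden shape'' test for colorings near the threshold, at the mild cost of importing two standard permutation-group facts; the paper's proof stays entirely inside elementary cycle-counting but pays with a longer and fiddlier case division. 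I find no gap in your version.
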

	\begin{proof}
		Suppose that $G$ has some symmetries and on the contrary we have $\theta(G)<|V(G)|$. Let $\theta(G)=q$ and assume $\alpha$ is a non-identity automorphism of $G$ for which $c(\alpha)$ is  maximum and that $\alpha=\sigma_1\cdots\sigma_r\gamma_1\cdots\gamma_s$ is the cycle decomposition of $\alpha$, where $\sigma_i$s are of lengths $\geq 2$ and all $\gamma_j$s have the length~$1$. According to Definition~\ref{max-lem}, we have $r+s=q-1$. 
		As $c(\alpha)$ is maximum, by Observation~\ref{prime-alpha} we know that the lengths of $\sigma_i$s are all equal to $p_\alpha$, where $p_\alpha$ is a prime number. Since $q=D(G)$, every coloring of $G$ with $q-1$ colors is non-distinguishing. We consider two cases.
		
		\begin{itemize}
			\item[Case 1.] $r\geq 2$. We split this case into the following two sub-cases.
			
			\begin{itemize}
				\item[Case 1.1] $p_\alpha\geq 3$. In this case, we consider the following coloring $c$ with $q-1$ colors. We color  all the vertices in $\B(\sigma_i)$, $i=2,\ldots,r$, with the unique color $i$, and the vertex in $\B(\gamma_j)$, $j=1,\ldots,s$, with the color $r+j$; then, color a certain vertex $v\in \B(\sigma_1)$ with color~$2$ and all the other vertices in $\B(\sigma_1)$ with color~$1$.  Since $c$ is not distinguishing, there must be a non-identity automorphism $\beta$ of $G$ which preserves $c$. This implies that 
				$\beta(\B(\gamma_j))=\B(\gamma_j)$, for $j=1,\ldots,s$, $\beta(\B(\sigma_i))=\B(\sigma_i)$, for  $i=3,\ldots,r$, $\beta(\B(\sigma_1)-\{v\})=\B(\sigma_1)-\{v\}$ and $\beta(\B(\sigma_2)\cup\{v\})=\B(\sigma_2)\cup\{v\}$. This shows that there is a power $\beta^\ell$ of $\beta$ which is not the identity and $c(\beta^\ell)>c(\alpha)$, a contradiction. 
				
				\item[Case 1.2] $p_\alpha=2$. By employing the method described in Case 1.1, we obtain an automorphism $\beta$ for which the prime number $p_\beta$ is bigger than $2$, while we have $c(\beta) =c(\alpha)$. However, the number of cycles of $\beta$ whose length is greater than or equal to 2 is strictly less than $r$. Consequently, since in this case the number of cycles of length 1 of $\beta$ is never less than 1, this case either leads to Case 1.1 or Case 2.1, both of which result in contradictions. 
			\end{itemize}

			\item[Case 2.] $r=1$. We split this case into the following two sub-cases.
			
			\begin{itemize}
				\item[Case 2.1.] $s\geq 1$. Since $\theta(G)<|V(G)|$, the length of $\sigma_1$ is at least~$3$, because if the length of $\sigma_1$ is 2 then we must have $c(\alpha) = \vert V(G)\vert-1$. If its length is equal to~$3$, and $s=1$, then $G$ is a graph on 4 vertices which has to be either $K_4$, $\overline{K_4}$, $K_{1,3}$ or  $K_1\cup K_3$. But this is a contradiction because the distinguishing thresholds for all these graphs are equal to 4, their number of vertices. On the other hand, if the length of $\sigma_1$ is equal to~$3$ and $s\geq 2$ or if the length of $\sigma_1$ is  at least~$5$,  then   we consider the following coloring $c$ with $q-1$ colors. We color    the vertex  in $\B(\gamma_j)$, $j=1,\ldots,s$, with the color $j$; then, color a certain vertex $v\in \B(\sigma_1)$ with color~$1$ and  the other two vertices in $\B(\sigma_1)$ with color~$s+1$. Then, since $c$ is not distinguishing, the existence of a non-identity automorphism $\beta$ of $G$, leads us to a similar contradiction as in Case~1.1. 
				
				\item[Case 2.2.]  $s=0$.  In this case we have $\alpha=\sigma_1$. In other words, the maximum number of cycles in non-identity automorphisms is 1 which implies that $\theta(G)=2$. Now the assumption that $\theta(G)<|V(G)|$, contradicts Theorem~\ref{theta2}. 
			\end{itemize}
		\end{itemize}
		This completes the proof.
	\end{proof}
	
	We note that the converse of Theorem~\ref{D=theta} does not hold; for example $\theta(K_{n,n})=|V(K_{n,n})|$ while $D(K_{n,n})=n+1$ (see~\cite{ahmadi2020number}). Furthermore, in the light of Theorem~\ref{D=theta}, we can rephrase Theorem~\ref{theta2} as follows: for any graph $G$,  $\theta(G)=D(G)=2$ holds if and only if $G$ is either $K_2$ or $\overline{K_2}$. It turns out that we can generalize this result in the following fashion.
	
	\begin{theorem}\label{D=theta_then_G=K_n_or_complement} 
		For a graph $G$ on $n$ vertices, we have $\theta(G)=D(G)$ if and only if $G$ is either asymmetric, the complete graph $K_n$ or the empty graph $\overline{K_n}$.
	\end{theorem}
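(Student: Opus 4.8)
The plan is to deduce the characterization from Theorem~\ref{D=theta} together with a short, self-contained description of the graphs satisfying $D(G)=|V(G)|$.

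For the ``if'' direction I would verify the three families directly. If $G$ is asymmetric, then $\theta(G)=1$ (immediate from Alternate Definition~\ref{max-lem}), and $1\le D(G)\le\theta(G)$ forces $D(G)=\theta(G)=1$. If $G=K_n$ or $G=\overline{K_n}$, then $\theta(G)=n=D(G)$, these values being recorded in Section~\ref{intro} and in~\cite{ahmadi2020number,albertson1996symmetry}.

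For the ``only if'' direction, suppose $\theta(G)=D(G)$ and set $n=|V(G)|$. By Theorem~\ref{D=theta} we have $\theta(G)=1$ or $\theta(G)=n$. If $\theta(G)=1$, then by Alternate Definition~\ref{max-lem} every $\alpha\in\aut(G)$ has $c(\alpha)=0$; since a non-identity permutation of $V(G)$ always has at least one cycle under the present convention, this means $\aut(G)=\{\id\}$, so $G$ is asymmetric. If instead $\theta(G)=n$, then $D(G)=n$, and I would prove that $\aut(G)=\sym(n)$. Granting this, since $\sym(n)$ acts transitively on the unordered pairs of vertices while the edge set of $G$ is $\aut(G)$-invariant, either all pairs are edges or none is, i.e.\ $G=K_n$ or $G=\overline{K_n}$. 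To obtain $\aut(G)=\sym(n)$ it suffices to show that every transposition of $V(G)$ is an automorphism. Given distinct $u,w\in V(G)$, I would use the coloring that assigns $u$ and $w$ one common color and gives the remaining $n-2$ vertices $n-2$ further pairwise distinct colors. This coloring uses $n-1<D(G)$ colors, hence is not distinguishing, so some $\beta\in\aut(G)\setminus\{\id\}$ preserves it; as $\beta$ fixes each of the $n-2$ singleton color classes pointwise and $\beta\neq\id$, it must interchange $u$ and $w$, giving $(u\;w)\in\aut(G)$. Since $u,w$ were arbitrary and transpositions generate $\sym(n)$, we conclude $\aut(G)=\sym(n)$.

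The only step carrying real content beyond Theorem~\ref{D=theta} is this transposition argument, which is a routine pigeonhole computation, so I do not anticipate a genuine obstacle. The one thing to double-check is the degenerate range $n\le 2$, where a graph cannot have both an edge and a non-edge, so the final dichotomy $G\in\{K_n,\overline{K_n}\}$ holds there trivially.
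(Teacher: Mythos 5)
Your proposal is correct and follows essentially the same route as the paper: both reduce to Theorem~\ref{D=theta}, then use the $(n-1)$-coloring that repeats one color on a pair $\{u,w\}$ and is injective elsewhere to force the transposition $(u\;w)$ into $\aut(G)$. The only cosmetic difference is at the end, where you conclude $\aut(G)=\sym(n)$ and invoke transitivity on unordered pairs, while the paper reads off $N(u)\setminus\{w\}=N(w)\setminus\{u\}$ for every pair and deduces directly that $G$ is complete or empty.
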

	\begin{proof}
		The ``if'' part is obvious. For the converse, suppose that $\aut(G)$ is not trivial. Note that according to Theorem~\ref{D=theta}, any coloring of $G$ with $n-1$ colors is not distinguishing. Consider two distinct  vertices $u,v\in V(G)$ and color both of them with color~$1$ and assign a unique color $2,\ldots, n-1$ to each of the other vertices of $G$. Since this coloring is not distinguishing, there is a non-identity automorphism $\beta$ of $G$ which swaps the vertices $u$ and $v$ and fixes all the other vertices. Thus  $N(u)\setminus\{v\}=N(v)\setminus\{u\}$, where $N(u)$ is the set of neighbors of $u$. Using a similar coloring argument, one deduces that this equality holds for any pair of vertices of $G$,  which implies that $G$ is either the complete or the empty graph.
	\end{proof}

	As we noted in the introduction, the problem of distinguishing colorings of infinite graphs has been studied in many interesting research works. It is, therefore, an interesting problem to consider the distinguishing threshold for infinite graphs. We conclude this section with the following theorem which states that in order to guarantee that any coloring of an infinite graph breaks its non-trivial symmetries, one needs infinitely many colors. 
	
	\begin{theorem}
		Let $G$ be an infinite graph. Then either $\theta(G)=1$ or $\theta(G)=\infty$.
	\end{theorem}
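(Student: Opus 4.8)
The plan is to work through the Alternate Definition, $\theta(G) = 1 + \sup\{c(\alpha) : \alpha\in\aut(G)\}$ (reading the maximum there as a supremum, which is the right convention in the infinite setting). If $\aut(G) = \{\id\}$ then $G$ is asymmetric and $\theta(G) = 1$, so it suffices to treat the case $\aut(G)\neq\{\id\}$ and show that then the supremum is infinite, i.e. that for every positive integer $N$ there is an automorphism of $G$ with at least $N$ cycles. Equivalently, in terms of the original definition: a colouring that is constant on every cycle of a non-identity automorphism $\beta$ with at least $N$ cycles is a non-distinguishing colouring using $N$ colours, so no finite palette forces distinguishability.

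So fix a non-identity $\alpha\in\aut(G)$. The first step is a dichotomy for $\alpha$ alone: either $\alpha$ has infinitely many cycles, or $\alpha$ possesses at least one infinite cycle, that is, a vertex $v$ whose $\alpha$-orbit $\{\alpha^n(v) : n\in\mathbb{Z}\}$ is infinite. Indeed, if $\alpha$ had only finitely many cycles and each of them were finite, then $V(G)$ would be a finite union of finite sets, contradicting that $G$ is infinite. In the first case we are done immediately, since $c(\alpha)=\infty$ and hence $\theta(G)=\infty$.

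In the remaining case, let $O = \{\alpha^n(v) : n\in\mathbb{Z}\}$ be an infinite orbit of $\alpha$; note that $\alpha$ acts on $O$ like the shift on $\mathbb{Z}$, so the elements $\alpha^n(v)$ are pairwise distinct. For each positive integer $m$, the automorphism $\alpha^m$ partitions $O$ into the $m$ pairwise disjoint nonempty (in fact infinite) subsets $O_j = \{\alpha^{mn+j}(v) : n\in\mathbb{Z}\}$, $j=0,1,\dots,m-1$, and each $O_j$ is a single cycle of $\alpha^m$; consequently $c(\alpha^m)\ge m$. Moreover $\alpha^m\ne\id$, since it shifts the index along $O$ by $m\ne 0$ and hence moves every vertex of $O$. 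Letting $m$ run over all positive integers yields $\sup\{c(\beta) : \beta\in\aut(G)\}=\infty$, so $\theta(G)=\infty$, completing the proof.

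There is no serious obstacle here; the only point worth stating carefully is the need to exhibit, for each $m$, a genuinely non-identity automorphism with many cycles, which is exactly why the right move is to isolate a \emph{bi-infinite} orbit $O$ (rather than merely an orbit on which $\alpha$ acts non-trivially): the powers $\alpha^m$ then automatically stay non-trivial while their cycle counts grow without bound. One should also make explicit, when passing from the Alternate Definition back to the original one, that a colouring realising $N$ colours and constant on the cycles of such an $\alpha^m$ is non-distinguishing, so $\theta(G)>N$ for every finite $N$.
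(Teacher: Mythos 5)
Your proof is correct and follows essentially the same route as the paper: after disposing of the asymmetric case, you show that a non-identity automorphism either already has infinitely many cycles or has an infinite orbit, and in the latter case the powers $\alpha^m$ are non-identity automorphisms with at least $m$ cycles. Your version is in fact slightly more carefully argued than the paper's (which splits on finite versus infinite order of $\alpha$ and asserts the cycle count of $\alpha^m$ without isolating the bi-infinite orbit), but the underlying idea is identical.
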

	\begin{proof}
		If $G$ is asymmetric, then $\theta (G)=1$. Hence we assume there is a non-trivial $\alpha\in\aut (G)$. If $\alpha$ has finite order, all its cycles are finite, so it must have infinitely many cycles. If $\alpha$ has infinite order then $\alpha^m$ has at least $m$ cycles for any $m\in \mathbb{N}$. Thus there is no finite bound on $\theta(G)$.
	\end{proof}

	\section{Johnson Scheme}\label{johnson}
	In this section we determine the distinguishing threshold of the graphs in the  Johnson scheme. 
	We assume  $n$ and $k$ are integers such  that $n\geq 2k\geq 2$, and  the set $\{1,\ldots,n\}$ is denoted by $[n]$. For any $i=1,\ldots, k$,  the graph $J(n,k,i)$ is defined to be the graph whose vertex set is the set of all $k$-subsets of $[n]$ and in which two vertices $A$ and $B$ are adjacent if  $|A\cap B|=k-i$. It can be shown that the set $\mathcal{A}=\{A_0,A_1,\ldots, A_k\}$, where $A_0$ is the identity matrix of order ${n\choose k}$ and, for any $i=1,\ldots,k$,  $A_i$ is the adjacency matrix of $J(n,k,k-i)$, constitutes  an association scheme (we refer the reader to~\cite{eiichi1984algebraic} for detailed studies on association schemes). This scheme is called the Johnson scheme, denoted by $J(n,k)$, and the classes $J(n,k,i)$, $1\leq i\leq k$, are called the generalized Johnson graphs. 
	The special cases of $J(n,k,k)$ and $J(n,k,1)$ are called the \emph{Kneser graph}, denoted by $K(n,k)$, and the \emph{Johnson graph}, respectively.
	
	The natural action of the symmetric group $\sym(n)$ on $[n]$, obviously preserves the adjacency-nonadjacency  relations in $J(n,k,i)$, for any $i=1,\ldots, k$. Therefore, $\sym(n)$ is isomorphic to a subgroup of  $\aut (J(n,k,i))$. The automorphism groups of the so-called ``merged Johnson graphs'' have been evaluated by Jones~\cite{jones2005automorphisms}. The merged Johnson graphs are, indeed, the unions of some graphs in the Johnson scheme. We re-state~\cite[Theorem~2]{jones2005automorphisms} to suit our case, where we study the individual graphs in the scheme, i.e. the generalized Johnson graphs. As in~\cite{jones2005automorphisms}, we will use the notation $e=\frac{1}{2}{n\choose k}$ and that given two groups $\Gamma_1$ and $\Gamma_2$, the groups $\Gamma_1 : \Gamma_2$ and  $\Gamma_1\wr \Gamma_2$ are their ``semi-direct product'' and ``wreath product'', respectively.
	
	\begin{theorem}\textnormal{\cite{jones2005automorphisms}}
		\label{jones}
		Assume that $2\leq k\leq n/2$.
		\begin{enumerate}[(a)]
			\item If $2\leq k<\frac{n-1}{2}$, then $\aut(J(n,k,i))\cong \sym(n)$, for each $i=1,\ldots,k$.
			\item If $k=\frac{n-1}{2}$ and  $i\neq \frac{k+1}{2}$, then $\aut(J(n,k,i))\cong \sym(n)$.
			\item If $k=\frac{n-1}{2}$ and  $i= \frac{k+1}{2}$, then $\aut(J(n,k,i))\cong \sym(n+1)$.
			\item If $k=\frac{n}{2}$ and  $k=2$, then $\aut(J(n,k,i))\cong \sym(2)\wr\sym(3)$,  for each $i=1,2$.
			\item If $k=\frac{n}{2}$, $k>2$, $i<k$ and $i\neq \frac{k}{2}$, then $\aut(J(n,k,i))\cong \sym(2)\times\sym(n)$.
			\item If $k=\frac{n}{2}$, $k>2$ and $i =  \frac{k}{2}$, then $\aut(J(n,k,i))\cong \sym(2)^e : \sym(n)$.
			\item If $k=\frac{n}{2}$, $k>2$ and $i=k$, then $\aut(J(n,k,i))\cong \sym(2)\wr\sym(e)$.\qed
		\end{enumerate}
	\end{theorem}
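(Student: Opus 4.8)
The plan is to read the statement off from Jones' classification of the automorphism groups of \emph{merged} Johnson graphs~\cite[Theorem~2]{jones2005automorphisms}, using the observation that each generalized Johnson graph is the merged graph attached to a single class of the Johnson scheme. For a nonempty $S\subseteq\{1,\ldots,k\}$, write $\Gamma(n,k,S)$ for the graph on the $k$-subsets of $[n]$ in which $A\sim B$ whenever $|A\cap B|=k-i$ for some $i\in S$; then $J(n,k,i)=\Gamma(n,k,\{i\})$, and since $k\geq 2$ the set $S=\{i\}$ is a proper nonempty subset of $\{1,\ldots,k\}$, so $J(n,k,i)$ is neither complete nor edgeless (the only disconnected case in the range $n\geq 2k$, the perfect matching $K(2k,k)=\Gamma(2k,k,\{k\})$, being handled directly). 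Hence Jones' theorem applies, and the work reduces to determining, for each branch of his list, what the condition ``$S=\{i\}$'' amounts to and simplifying the resulting group.

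The first split is by the size of $n$. For $n\geq 2k+2$ Jones' theorem gives $\aut\cong\sym(n)$ for every class, which is item~(a). For $n=2k+1$ the only extra symmetries come from the bijection between $k$-subsets of $[n]$ and complementary pairs of $(k+1)$-subsets of $[n+1]$; under this bijection the classes indexed by $i$ and by $k+1-i$ get merged, so $\sym(n+1)$ acts on $J(n,k,i)$ by itself exactly when $i=k+1-i$, i.e.\ $i=\frac{k+1}{2}$ (which forces $k$ odd), and this separates items~(b) and~(c). For $n=2k$ the complementation map $A\mapsto[n]\setminus A$ is always an automorphism, so a factor $\sym(2)$ is forced, and one splits according to $i$: if $i=k$ then $J(2k,k,k)=K(2k,k)$ is a matching of $e=\frac{1}{2}\binom{n}{k}$ edges with automorphism group $\sym(2)\wr\sym(e)$, which is item~(g); if $i=\frac{k}{2}$ (so $k$ is even) then the relation ``$|A\cap B|=\frac{k}{2}$'' is unchanged when $B$ is replaced by $[n]\setminus B$, so $A$ and $[n]\setminus A$ have the same neighbourhood and the $e$ complementary pairs can be transposed independently, which together with the induced $\sym(n)$-action gives $\sym(2)^e : \sym(n)$, that is item~(f); and for the remaining range $i<k$, $i\neq\frac{k}{2}$, the only additional automorphism is global complementation, so $\aut\cong\sym(2)\times\sym(n)$, which is item~(e). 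The tiny case $n=4$, $k=2$ has to be treated separately, because there $\sym(4)$ does not act faithfully on the three complementary pairs (its kernel is the Klein four-group), so the formulas of (e)--(g) would not take the stated shape; a direct inspection of the octahedron $J(4,2,1)$ and of the three-edge matching $J(4,2,2)$ shows that both have automorphism group $\sym(2)\wr\sym(3)$, which is item~(d).

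The only real difficulty here is the bookkeeping in the exceptional branches: one must check that Jones' hypotheses --- phrased for a general set $S$ in terms of invariants of the Johnson association scheme --- collapse precisely to the numerical conditions on $i$ recorded above when $|S|=1$, that each displayed group is the \emph{full} automorphism group and not merely a subgroup, and that no further small-order coincidences occur beyond the $n=4$, $k=2$ case already isolated.
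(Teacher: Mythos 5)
Your proposal matches the paper's treatment: the paper gives no independent proof of this theorem, presenting it as a restatement of Jones' classification of merged Johnson graphs \cite[Theorem~2]{jones2005automorphisms} specialized to singleton merging sets $S=\{i\}$, with only a short remark explaining the exceptional cases (d) and (g) via the perfect-matching structure of $K(2k,k)$ --- exactly the reduction and bookkeeping you describe. Your specialization of each branch (including the twin-vertex argument for $i=k/2$ and the non-faithful action in the $n=4$, $k=2$ case) is correct.
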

	
	Note that in the case  (d), the Johnson scheme $J(n,k)=J(4,2)$ has only two complementary graphs on $6$ vertices, where $J(4,2,2)=K(4,2)$ consists of $3$ copies of $K_2$; hence the automorphism groups of $J(4,2,1)$ and $J(4,2,2)$ are isomorphic to $ \sym(2)\wr\sym(3)$. Similarly, in the case (g), the generalized Johnson graph $J(2k,k,k)$, i.e. the Kneser graph $K(2k,k)$, consists of $e$ copies of $K_2$ resulting in the mentioned automorphism group.

	\begin{remark}\label{sym(n+1)_action}
		According to~\cite{jones2005automorphisms}, the rather unusual action of $\aut(J(n,k,i))\cong \sym(n+1)$ mentioned in part (c) of Theorem~\ref{jones}  is as follows. We add an external object  $\infty$ to $N=[n]$ to obtain  a new set $N^\ast=\{1,\ldots, n,\infty\}$ and consider the natural action of $\sym(n+1)$ on $N^\ast$. If $\sigma\in \sym(n+1)$, in order to find the image of a vertex $X\in V(J(n,k,i))$ under the corresponding automorphism $\tilde{\sigma}$ of $\sigma$ in $\aut(J(n,k,i))$, we construct the equipartition $(P_1,P_2)=(X\cup\{\infty\} , \overline{X\cup\{\infty\}}   )$ of $N^\ast$. Then, we make $\sigma$ act naturally on the pair $(P_1,P_2)$ to get $(P_1',P_2')$. The image $\tilde{\sigma} (X)$ is, then, $P_j'\setminus\infty$, where $\infty\in P_j'$. Because of the adjacency condition on the generalized Johnson graph $J(n,k,i)$, this is indeed an automorphism. It is clear that    the stabilizer of $\infty$ in $\sym(n+1)$, which is equal to $\sym(n)$, acts naturally (as mentioned before Theorem~\ref{jones}) on the vertices of $J(n,k,i)$.   Consider, for example, the case of $J(7,3,2)$ whose parameters satisfy the conditions in part (c). If $\sigma=(1\; 2\;\infty)(3\;4\;5)(6\;7) \in \sym(8)$ and $X=\{1,2,3\}, Y=\{3,4,5\}\in V(J(7,3,2))$, then  $\tilde{\sigma}(X)=\{2,4,1\}$ and $\tilde{\sigma}(Y)=\{2,7,6\}$.
	\end{remark}

	Using Theorem~\ref{jones},  one can evaluate the distinguishing number and the distinguishing threshold of the generalized Johnson graphs. We first note that the Kneser graph $K(n,1)=J(n,1,1)$ is the complete graph $K_n$; hence $\D(K(n,1))=\vert K(n,1)\vert =n$. 
	In addition, the Kneser graph $K(5,2)=J(5,2,2)$ is isomorphic to the Petersen graph. Albertson and Collins proved~\cite{albertson1996symmetry} that the distinguishing number of the Petersen graph $K(5,2)$ is equal to  $3$. Surprisingly, for $k\geq 2$ the Petersen graph is the only Kneser graph which is not $2$-distinguishable. In fact, Albertson and Boutin proved~\cite{albertson2007using} that for any $n\neq 5$ and $k\geq 2$, we have $\D(K(n,k))=2$. Furthermore, based on Theorem~\ref{jones},  Kim et al. in~\cite{kim2011distinguishing} evaluated the distinguishing number of all merged Johnson graphs which generalized the above results. Similar to Theorem~\ref{jones}, we rephrase their results, in terms of single classes of the Johnson scheme, as follows.
	
	\begin{theorem}\textnormal{\cite{kim2011distinguishing}}\label{d_of_generalized_johnson}
		Assume that $2\leq k\leq n/2$.
		\begin{enumerate}[(a)]
			\item If $n=5$ and $k=2$, then  $\D(J(n,k,1))=\D(J(n,k,2))=3$.
			\item If $n\neq 5$ and  $2\leq k<\frac{n}{2}$, then $\D(J(n,k,i))=2$, for each $i=1,\ldots,k$.
			\item  If  $ k=\frac{n}{2}$ and $i\notin\{ \frac{k}{2} , k\}$, then $\D(J(n,k,i))=2$.
			\item If  $ k=\frac{n}{2}$ and  $i=\frac{k}{2}$, then $\D(J(n,k,i))=3$.
			\item  If   $ k=\frac{n}{2}$ and  $i=k$, then $\D(J(n,k,i))=\lceil  \frac{1+\sqrt{1+8e}}{2}\rceil$.\qed
		\end{enumerate}
	\end{theorem}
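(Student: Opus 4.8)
The plan is to derive the theorem from the classification of automorphism groups in Theorem~\ref{jones} together with the elementary fact that $\D(G)$ depends only on $\aut(G)$ regarded as a permutation group on $V(G)$ (so in particular $\D(G)=\D(\overline G)$), and three imported ingredients: $\D(K(5,2))=3$ for the Petersen graph~\cite{albertson1996symmetry}; the distinguishing $2$-colouring of $K(n,k)$ for $n\neq5$, $k\geq2$ of Albertson and Boutin~\cite{albertson2007using}, whose construction uses only the natural action of $\sym(n)$ on $k$-subsets; and the elementary value of $\D$ of a disjoint union of edges. Equivalently, since $J(n,k,i)$ is exactly the merged Johnson graph attached to the one-element set $\{i\}$, the statement is the specialization of the Kim--Kwon--Lee classification~\cite{kim2011distinguishing} to that case. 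I would organize the argument as a case analysis following Theorem~\ref{jones}.

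First I would handle the cases where $\aut(J(n,k,i))$ acts as $\sym(n)$ on $k$-subsets (Theorem~\ref{jones}(a),(b)), as $\sym(n+1)$ in the fashion of Remark~\ref{sym(n+1)_action} (Theorem~\ref{jones}(c)), or as $\sym(2)\times\sym(n)$ (Theorem~\ref{jones}(e); the subcase $n=4$ is vacuous for single classes, since then $i\in\{1,2\}=\{\tfrac k2,k\}$). For the pure $\sym(n)$-action: if $n\neq5$, the rigid family of $k$-subsets of~\cite{albertson2007using} produces a distinguishing $2$-colouring, so $\D=2$; if $n=5$ (which forces $k=2$) the group is that of the Petersen graph and of its complement $J(5,2,1)$, so $\D(J(5,2,1))=\D(J(5,2,2))=3$ by~\cite{albertson1996symmetry}. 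In case (c) the parameters force $n\equiv3\pmod4$, hence $n\neq5$, and one checks that a suitable $2$-colouring, read through the partition description of Remark~\ref{sym(n+1)_action}, still distinguishes, so $\D=2$. In case (e) the vertices fall into complementary pairs $\{X,\overline X\}$, which are non-adjacent since $|X\cap\overline X|=0\neq k-i$; a $2$-colouring chosen to break both the $\sym(n)$-action and the complementation $X\mapsto\overline X$ gives $\D=2$.

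It then remains to treat $k=\tfrac n2$ with $i\in\{\tfrac k2,k\}$. For $i=k$ the graph $J(2k,k,k)=K(2k,k)$ is a perfect matching on $\binom{2k}{k}=2e$ vertices with $\aut\cong\sym(2)\wr\sym(e)$; a colouring is distinguishing precisely when every edge is bichromatic and the edge colour-pairs are pairwise distinct, so $\D$ is the least $c$ with $\binom{c}{2}\geq e$, namely $\lceil\tfrac{1+\sqrt{1+8e}}{2}\rceil$. The main obstacle is $i=\tfrac k2$, where $\aut\cong\sym(2)^e : \sym(n)$ (and $\sym(2)\wr\sym(3)$ when $n=4$): the vertices again form $e$ complementary pairs, on which the normal subgroup $\sym(2)^e$ acts by independent transpositions and $\sym(n)$ by permuting the points. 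For the bound $\D>2$, note that breaking $\sym(2)^e$ forces every pair to be bichromatic, so the red class $R$ is a transversal of the pairs; but then for any nontrivial $\pi\in\sym(n)$ the unique $s\in\sym(2)^e$ with $s(\pi(R))=R$ makes $s\pi$ colour-preserving, and $s\pi\neq\id$ because $\sym(n)\cap\sym(2)^e=\{\id\}$---a contradiction. The harder half is $\D\leq3$: one must exhibit a $3$-colouring in which every pair is bichromatic (killing $\sym(2)^e$) and the distribution of the third colour pins down a configuration of pairs whose setwise stabilizer in the faithful $\sym(n)$-action on pairs is trivial (killing the residual elements $s\pi$); carrying this out uniformly in $n$ and $k$---the case $n=4$ being the familiar colouring of the three pairs of $3K_2$ by the three two-element colour patterns---is the step I expect to require the most care, and it is essentially the content of the corresponding case of~\cite{kim2011distinguishing}.
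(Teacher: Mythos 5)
A preliminary point: the paper does not prove this statement at all. It is quoted (in rephrased form, specialized to single classes of the Johnson scheme) from Kim, Kwon and Lee \cite{kim2011distinguishing} and stated without proof, so there is no internal argument to measure your reconstruction against. What you have written is a sketch of how the cited result would be derived from Theorem~\ref{jones}, which is indeed the route the cited authors take.

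Judged on its own terms, your sketch is sound in outline and correct wherever you actually carry out the argument: the reduction of $\D$ to the permutation group $\aut(G)$ (hence $\D(J(5,2,1))=\D(K(5,2))=3$), the transfer of the Albertson--Boutin $2$-colouring to every $J(n,k,i)$ whose automorphism group is $\sym(n)$ in its natural action, the observation that case (c) of the statement is vacuous for $n=4$, the matching computation giving $\D=\lceil\frac{1+\sqrt{1+8e}}{2}\rceil$ when $i=k$, and the transversal argument showing $\D>2$ when $i=\frac{k}{2}$ are all correct. However, three upper-bound constructions are asserted rather than proved: the $2$-colouring that survives the enlarged group $\sym(n+1)$ in Jones's case (c); the $2$-colouring that additionally breaks complementation when $\aut\cong\sym(2)\times\sym(n)$; and, as you yourself flag, the $3$-colouring for $i=\frac{k}{2}$ that defeats every element of $\sym(2)^e:\sym(n)$. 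These are precisely the nontrivial content of \cite{kim2011distinguishing}, so as a standalone proof your proposal has genuine gaps; as an account of how the citation decomposes into cases and which ingredients each case needs, it is accurate and consistent with how the paper uses the result.
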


	We now turn our attention to the problem of determining the distinguishing  threshold of the graphs in the Johnson scheme. As mentioned above, $J(n,1,1)$ consists only of the complete graph $K_n$; hence $\theta(J(n,1,1))=n$. Thus, we consider only the case where $k\geq 2$.

	\begin{theorem}\label{theta_of_johnson}
		Assume that $2\leq k\leq n/2$.
		\begin{enumerate}[(a)]
			\item If $2\leq k< n/2$, then $\theta(J(n,k,i))={n\choose k} - {n-2\choose k-1}+1$, for each $i=1,\ldots,k$.
			\item  If  $ k=\frac{n}{2}$ and $i\in\{ \frac{k}{2} , k\}$, then $\theta(J(n,k,i))={n\choose k}$.
			\item If  $ k=\frac{n}{2}$ and $i\notin\{ \frac{k}{2} , k\}$, then $\theta(J(n,k,i))={n\choose k} - {n-2\choose k-1}+1$.
		\end{enumerate}
	\end{theorem}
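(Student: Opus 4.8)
The plan is to use the Alternate Definition~\ref{max-lem}, so that $\theta(J(n,k,i))=1+\max\{c(\alpha):\alpha\in\aut(J(n,k,i))\}$, and by Observation~\ref{prime-alpha} I may restrict attention to $\alpha$ of prime order $p$, all of whose nontrivial cycles have length $p$; I denote $N=\binom nk$. For the lower bounds: in cases (a) and (c) the transposition $\tau=(1\,2)\in\sym(n)\leq\aut(J(n,k,i))$ (Theorem~\ref{jones}, and in the $\sym(n+1)$ subcase via $\operatorname{Stab}(\infty)\cong\sym(n)$ of Remark~\ref{sym(n+1)_action}) induces an automorphism $\tilde\tau$ that pairs up the $2\binom{n-2}{k-1}$ $k$-sets meeting $\{1,2\}$ in exactly one point and fixes the rest, so $c(\tilde\tau)=N-\binom{n-2}{k-1}$ and $\theta\ge N-\binom{n-2}{k-1}+1$. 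In case (b) I would instead exhibit an automorphism that is a \emph{transposition of two vertices}, which forces $\theta=N$ since always $\theta\le N$: for $i=k$ the graph is $e$ disjoint copies of $K_2$ and one flips a single edge; for $i=\tfrac k2$ (and when $k=2$) the map interchanging one complementary pair $\{X,[n]\setminus X\}$ and fixing every other $k$-set is an automorphism, because $|Y\cap X|=\tfrac k2\iff|Y\cap([n]\setminus X)|=\tfrac k2$ when $n=2k$.

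The core of the argument is the matching upper bound $c(\alpha)\le N-\binom{n-2}{k-1}$ for the natural $\sym(n)$-action. If $\sigma\in\sym(n)$ has prime order $p$ with $t$ cycles of length $p$, then a $k$-set is fixed by $\tilde\sigma$ precisely when it is a union of $\langle\sigma\rangle$-orbits on $[n]$, so the number of fixed vertices is $[x^k]\,(1+x^p)^t(1+x)^{\,n-tp}$ and, since every nontrivial cycle of $\tilde\sigma$ has length $p$,
\[
c(\tilde\sigma)=N-m(\tilde\sigma)\,\tfrac{p-1}{p}\;\le\;N-\tfrac12\,m(\tilde\sigma),
\]
where $m(\tilde\sigma)$ is the motion. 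Thus it suffices to prove $m(\tilde\sigma)\ge 2\binom{n-2}{k-1}$, i.e. $[x^k](1+x^p)^t(1+x)^{\,n-tp}\le[x^k](1+x^2)(1+x)^{\,n-2}$. Writing $(1+x)^{n-2}=(1+x)^{tp-2}(1+x)^{\,n-tp}$, this follows from the coefficientwise inequality $(1+x^2)(1+x)^{tp-2}\succeq(1+x^p)^{t}$, valid for every prime $p$ and $t\ge1$, which I would prove by induction on $t$: the case $t=1$ holds because the coefficient of $x^j$ in $(1+x^2)(1+x)^{p-2}$ is $\binom{p-2}{j}+\binom{p-2}{j-2}$, which equals $1$ for $j\in\{0,p\}$ and is nonnegative otherwise; the step multiplies the $(t-1)$-case by $(1+x)^p\succeq 1+x^p$, using that $A\succeq B$ and $C\succeq D$ (with nonnegative coefficients) imply $AC\succeq BD$. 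This yields $c(\tilde\sigma)\le N-\binom{n-2}{k-1}$ and, with the lower bound, proves (a) whenever $\aut\cong\sym(n)$ and the bulk of (c); note $\binom{n-2}{k-1}\ge 2$, so no automorphism is a vertex-transposition and indeed $\theta<N$ here.

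It then remains to deal with the extra automorphisms in Theorem~\ref{jones}. For (c), where $k=\tfrac n2>2$ and $\aut\cong\sym(2)\times\sym(n)$, the new elements are $\kappa\tilde\sigma$ with $\kappa\colon X\mapsto[n]\setminus X$; since $\kappa$ is a fixed-point-free involution commuting with $\sym(n)$ and not induced by any permutation of $[n]$, an element $\kappa\tilde\sigma$ of prime order has order $2$ with $\sigma$ an involution, and $\kappa\tilde\sigma$ fixes $X$ iff $\sigma X=[n]\setminus X$, which forces $\sigma$ to be a perfect matching of $[n]$, giving exactly $2^k$ fixed vertices, and no fixed vertex otherwise; hence $c(\kappa\tilde\sigma)\le e+2^{k-1}$, and one checks $e+2^{k-1}\le N-\binom{n-2}{k-1}$ (it reduces to $2^{k-1}\le\binom{2k-2}{k-2}$, which holds for all $k\ge3$), finishing (c). For the subcase $k=\tfrac{n-1}2,\ i=\tfrac{k+1}2$ of (a), where $\aut\cong\sym(n+1)$ acts as in Remark~\ref{sym(n+1)_action}, I would identify $V(J(n,k,i))$ with the set of bisections of $[n+1]$ into two $(k+1)$-sets (there are $\tfrac12\binom{n+1}{k+1}=N$ of them) and count $\langle\sigma\rangle$-orbits by Burnside's lemma: away from fixed-point-free $\sigma$ a bisection $\{Z,\overline Z\}$ is $\sigma$-invariant iff $\sigma Z=Z$, and the needed bound becomes exactly the coefficient inequality above with $(n,k)$ replaced by $(n+1,k+1)$, while the finitely many fixed-point-free shapes (essentially perfect matchings of $[n+1]$, using that $k$ is odd here) are disposed of by a direct numerical estimate.

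The main obstacle is the combinatorial inequality $m(\tilde\sigma)\ge 2\binom{n-2}{k-1}$ — the assertion that a transposition moves the fewest $k$-sets of any non-identity permutation — which I reduce above to the polynomial inequality $(1+x^2)(1+x)^{tp-2}\succeq(1+x^p)^{t}$; once that is in hand, everything else is Burnside bookkeeping together with checking the exceptional automorphism shapes case by case against Theorem~\ref{jones}.
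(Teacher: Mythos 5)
Your proposal is correct, and its overall skeleton matches the paper's: the lower bound comes from the automorphism induced by a transposition of $[n]$, part (b) is settled by exhibiting an automorphism that transposes a single complementary pair of vertices and fixes everything else, and the heart of the matter is showing that no automorphism has more cycles than the induced transposition. Where you genuinely differ is in how that extremal statement is proved. The paper asserts that a $k$-set is fixed by $\tilde\beta$ if and only if it contains all or none of the moved points of $\beta$, obtains $x=\binom{n-m}{k-m}+\binom{n-m}{k}$ fixed vertices, and maximizes over $m$; this characterization undercounts (a fixed $k$-set need only be a union of $\langle\beta\rangle$-orbits, e.g.\ $\{1,2\}$ is fixed by $(1\,2)(3\,4)$ but contains neither all nor none of its moved points), so the paper's bound as written has an imprecision. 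Your count $[x^k](1+x^p)^t(1+x)^{n-tp}$ is the exact number of fixed vertices for a prime-order $\sigma$, and your coefficientwise inequality $(1+x^2)(1+x)^{tp-2}\succeq(1+x^p)^t$ (induction on $t$ from $(1+x)^p\succeq 1+x^p$, with the base case read off from $\binom{p-2}{j}+\binom{p-2}{j-2}$) yields the sharp statement that a transposition fixes the most $k$-sets, hence moves the fewest; combined with $c(\tilde\sigma)=N-m(\tilde\sigma)\tfrac{p-1}{p}\le N-\tfrac12 m(\tilde\sigma)$ this gives the upper bound rigorously. Your handling of the exceptional automorphisms --- the $\sym(n+1)$-action recast as an action on bisections of $[n+1]$, and the complementation coset in $\sym(2)\times\sym(n)$ with the verification $e+2^{k-1}\le N-\binom{2k-2}{k-1}$, equivalently $2^{k-1}\le\binom{2k-2}{k-2}$ for $k\ge3$ --- is likewise more explicit than the paper's brief remarks for parts (a$^{\sym(n+1)}$) and (c). In short: same strategy and same extremal automorphism, but your key lemma (transpositions minimize motion on $k$-sets) is established by a different, more robust generating-function argument that repairs the paper's fixed-point count.
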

	\begin{proof}
		We denote $J(n,k,i)$ by $J$ and the number of vertices of $J$ by $\nu$.  In order to show (a), first we note, according to Theorem~\ref{jones},  that  $\aut(J)\cong \sym(n)$ or $\sym(n+1)$. Consider the automorphism $\tilde{\alpha}\in \aut(J)$ defined as 
		\[\tilde{\alpha}=(X_1\cup \{1\} \,,\, X_1\cup \{2\})\;  (X_2\cup \{1\}  \,,\, X_2\cup \{2\})\cdots (X_r\cup \{1\}  \,,\, X_r\cup \{2\}),\]
		where $X_1,\ldots, X_r$ are all the $(k-1)$-subsets of $\{3,4,\ldots,n\}$ and, hence, 
		\[r={n-2\choose k-1}.\]
		Note that $\tilde{\alpha}$ is the image of the transposition $\alpha=(1, 2)\in \sym(n)$ or $\alpha=(1,2)\in \sym(n+1)$ under action of  $ \sym(n) $ or $ \sym(n+1)$, respectively, on $J$. The number of cycles in $\tilde{\alpha}$ is 
		\begin{equation}\label{number_of_cycles_of_alpha_tilde}
			c(\tilde{\alpha})= \nu- r= {n\choose k} - {n-2\choose k-1}.
		\end{equation}
		Hence, it suffices to show that $c(\tilde{\alpha})$ is the maximum in $\aut(J)$. Assume first that $\aut(J)\cong \sym(n)$ and  let $\beta \in \sym(n)$ be any non-identity permutation and $\tilde{\beta}$ be its image in $\aut(J)$. Let $M$ be the set of moved points of $\beta$ acting on $[n]$  and that $m=|M|$; hence $1<m\leq n$. A vertex $X\in V(J)$ is fixed under the action of $\beta$ on $V(J)$ if and only if 
		$M\cap X=\emptyset$ or $M\subseteq X$. Therefore, the number of vertices in $V(J)$ fixed by  $\tilde{\beta}$ is
		$x={n-m\choose k-m} + {n-m\choose k}$,
		and the number of  vertices moved by $\tilde{\beta}$ is ${n\choose k} -x$.
		It then follows that 
		\[
		c(\tilde{\beta})\leq x+ \frac{1}{2} \left[{n\choose k} -x\right] = \frac{1}{2}  \left[{n\choose k}  + x\right].
		\]
		On the other hand, 
		\[
		x\leq {n-2\choose k-2} + {n-2\choose k}.
		\]
		Hence
		\[c(\tilde{\beta})\leq \frac{1}{2}  \left[{n\choose k}  +  {n-2\choose k-2} + {n-2\choose k}  \right]= c(\tilde{\alpha}),\]
		which proves the claim. 
		
		Now, assume that $\aut(J)\cong \sym(n+1)$. Suppose $\sigma\in\sym(n+1)$.  According to Remark~\ref{sym(n+1)_action}, if $\sigma$ fixes~$\infty$, then similar to the previous  case, we have $c(\tilde{\sigma})\leq c(\tilde{\alpha})$. Hence, we assume that $\sigma(\infty)\neq \infty$. A vertex $X$ of $J$ is fixed by the  automorphism $\tilde{\sigma}$, if and only if $X$ is set-wise stabilized under the action of $\sigma$. Thus, in order to have the most number of fixed vertices, $\sigma$ must have   a largest fixed set. We conclude that $c(\tilde{\sigma})$ is maximized when $\sigma$ is a transposition. Without loss of generality, we assume that  $\sigma=(1\; \infty)$. Then it is easy to see that
		\[\tilde{\sigma}= (X_1, \overline{X_1\cup\{1\}}) (X_2, \overline{X_2\cup\{1\}})\cdots (X_s, \overline{X_s\cup\{1\}}), \]
		where $X_i$ are all the $k$-subsets of $\{2,3,\ldots,n\}$. Hence
		\[c(\tilde{\sigma}) = {n\choose k} - \frac{1}{2}{n-1\choose k} = {n\choose k} - \frac{k}{n-1}{n-1\choose k}=  {n\choose k} - {n-2\choose k-1} = c(\tilde{\alpha}), \]
		which complete the proof of (a).
		
		Part (b) follows from the fact that the map  which swaps the two vertices  $\{1,\ldots,k\}$ and $\{k+1,\ldots,n\}$, and fixes all the other vertices of $J$, is indeed an automorphism of $J$ having $\nu-1$ cycles.
		
		To show part (c), the map  $\tilde{\alpha}$ defined  in part (a) is again in  $\aut(J)$, as well. Hence it suffices to show that it, again, has the largest number of cycles in $\aut(J)$. According to Theorem~\ref{jones}, $\aut(J)\cong \sym(2)\times \sym(n)$. In fact, the complementary map $\delta$ is an automorphism of $J$ (of order two) and any automorphism $\tilde{\gamma}$ of $J$ is of the form $\tilde{\gamma}=\delta^j \tilde{\beta}=\tilde{\beta} \delta^j$, where $j=0$ or $1$ and $\tilde{\beta}$ is the image of a permutation $\beta\in \sym(n)$ under the natural action of $\sym(n)$ on $[n]$. This implies that  $c(\tilde{\gamma})\leq c(\tilde{\alpha})$ and, hence, the result follows.
	\end{proof}
	
	In~\cite{ahmadi2020number}, the distinguishing threshold of the Kneser graphs $K(n,2)$ has been determined. We point out that Theorem~\ref{theta_of_johnson} generalizes this result to all Kneser graphs $K(n,k)$.

	\section{Conclusion and future work}\label{conclusion}
	The problem of finding the distinguishing threshold of a graph seems an interesting one and in this paper we considered it in more depth. Along with studying graphs with small thresholds, we established a strong connection between the threshold and the cycle structure of the automorphisms of the graph and, using this approach, we could completely evaluate the thresholds of the generalized Johnson graphs.
	
	We conclude the paper with considering  a similar problem for Cayley graphs $\mathrm{Cay}(\Gamma,S)$. While the distinguishing number and the distinguishing index of Cayley graphs are studied by Alikhani and Soltani \cite{Alikhani-Cayley2021}, we propose the following  problem in which the knowledge of the structures of the automorphism groups can play a key role. 
	
	\bigskip
	\noindent \textbf{Problem.} Evaluate the distinguishing threshold of Cayley graphs.
	\bigskip
	
	\noindent 
	We recall that for any $g\in \Gamma$, the action of $\Gamma$ on itself by left multiplication by $g$ naturally induces an automorphism of  $\mathrm{Cay}(\Gamma,S)$ (which, in turn, shows that Cayley graphs are vertex-transitive). Therefore, in this case, one already knows that $\Gamma$ is isomorphic to a subgroup of $\aut(\mathrm{Cay}(\Gamma,S))$. Indeed  the family of Cayley  graphs is another family of graphs whose automorphism groups have been studied extensively; see for example~\cite{jajcay2000structure} by Jajcay.  As mentioned in Section~\ref{threshold}, we consider only connected Cayley graphs, i.e. the graphs $\mathrm{Cay}(\Gamma,S)$ in which $S$ generates the group $\Gamma$. 
	Based on examples, the problem looks quite challenging.  For instance, complete graphs are Cayley graphs with trivial thresholds while the Cayley graphs $\mathrm{Cay}(\mathbb{Z}_n, \{a,-a\})$, where $0\neq a$ is relatively prime to $n$, are cycles whose thresholds are $\lfloor\frac{n}{2}\rfloor+2$ for $n\geq 3$, which is not trivial. As another example, in which the group is abelian but not  cyclic, consider $G_6=\mathrm{Cay}(\mathbb{Z}_2\oplus \mathbb{Z}_3 , \{(1,0), (0,1), (0,2)\})$; see Figure~\ref{G_6}.
	
	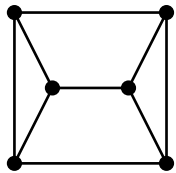
\begin{figure}[h!]
		\centering
		\begin{center}
			
			\begin{tikzpicture}  [scale=0.5]
				
				\tikzstyle{every path}=[line width=1pt]
				
				\newdimen\ms
				\ms=0.1cm
				\tikzstyle{s1}=[color=black,fill,rectangle,inner sep=3]
				\tikzstyle{c1}=[color=black,fill,circle,inner sep={\ms/8},minimum size=2*\ms]
				
				% Define positions of all observables
				
				\coordinate (a1) at  (0,0);%1st
				\coordinate (a2) at (4,0);%first layer
				\coordinate (a3) at (4,4);%1st
				\coordinate (a4) at (0,4);%first layer
				\coordinate (a5) at (1,2);%2nd
				\coordinate (a6) at (3,2);%2nd
				% draw contexts
				
				\draw [color=black] (a1) -- (a2);
				\draw [color=black] (a2) -- (a3);
				\draw [color=black] (a3) -- (a4);
				\draw [color=black] (a4) -- (a1);
				\draw [color=black] (a1) -- (a5);
				\draw [color=black] (a4) -- (a5);
				\draw [color=black] (a5) -- (a6);
				\draw [color=black] (a6) -- (a2);
				\draw [color=black] (a6) -- (a3);
				% draw atoms
				
				\draw (a1) coordinate[c1];
				\draw (a2) coordinate[c1];
				\draw (a5) coordinate[c1];
				\draw (a6) coordinate[c1];
				\draw (a3) coordinate[c1];
				\draw (a4) coordinate[c1];
			\end{tikzpicture}
			
		\end{center}
		\caption{The graph $G_6$.}
		\label{G_6}
	\end{figure}
	
	It is not hard to see that the map $\alpha$ which swaps the    vertices $(0,1), \, (0,2)$, swaps  the   vertices $(1,1),\, (1,2)$, and fixes the other vertices, is an automorphism of $G_6$ with $c(\alpha)=4$ which is maximum. Thus, we have $\theta(G_6)=5$ which displays another non-trivial example. Further, as a non-abelian non-trivial example, one can consider  $G_{24}=\mathrm{Cay}(S_4, \{(1,2) , (2,3), (2,4)\})$ on the symmetric group $S_4$ which is a connected cubic bipartite graph. With an easy computer search, we observe that there is an $\alpha\in \aut(G_{24})$ that is the product of $8$ transpositions and, hence, $c(\alpha)=16$  which is maximum. This shows that $\theta(G_{24})=17$, far smaller than  the number of vertices.

	\section*{Acknowledgment}
	The  authors would like to express their special thanks to Amir Mohammad Ghazanfari for his enlightening comments during the research work.

	\bibliographystyle{plain}
	\bibliography{bibliography}
	
\end{document}